\def\newaliasedtheorem#1[#2]#3{
  \newaliascnt{#1@alt}{#2}
  \newtheorem{#1}[#1@alt]{#3}
  \expandafter\newcommand\csname #1@altname\endcsname{#3}
}
\numberwithin{equation}{section}
\newtheoremstyle{slanted}{\topsep}{\topsep}{\slshape}{}{\bfseries}{.}{.5em}{}
\theoremstyle{plain}
\newtheorem{theorem}{Theorem}[section]
\theoremstyle{definition}
\theoremstyle{remark}
\newcommand{\setR}{\mathbb{R}}
\newcommand{\Leb}{\mathscr{L}}
\newcommand{\Prob}{\mathscr{P}}
\newcommand{\norm}[2][ ]{\left\| #2 \right\|_{#1}} % norm
\newcommand{\scal}[2]{\ensuremath{\langle #1 , #2 \rangle}} % scalar product
\DeclareMathOperator{\RCD}{RCD}
\DeclareMathOperator{\CD}{CD}
\newcommand{\Ch}{\operatorname{Ch}}
\begin{document}

\title{Lusin-type approximation of Sobolev by Lipschitz functions, in Gaussian and $\RCD(K,\infty)$ spaces}
\author{Luigi Ambrosio
\thanks{Scuola Normale Superiore, \url{luigi.ambrosio@sns.it}.} \and
Elia Bru\`e
\thanks{Scuola Normale Superiore, \url{elia.brue@sns.it}.} \and
Dario Trevisan
\thanks{Università degli studi di Pisa, \url{dario.trevisan@unipi.it}. Partially supported by GNAMPA project 2017 ``Campi vettoriali, superfici e perimetri in geometrie singolari''.}
}
\maketitle

\abstract{We establish new approximation results, in the sense of Lusin, of Sobolev functions by Lipschitz ones, in some classes of non-doubling metric measure structures. Our proof technique relies upon estimates for heat semigroups and applies to Gaussian and $\RCD(K, \infty)$ spaces. As a consequence, we obtain quantitative stability for regular Lagrangian flows in Gaussian settings.}

\tableofcontents
\newpage

\section{Introduction}

We say that a function $f:H\to\setR$ on a metric measure space $(H,d,m)$ is approximable in Lusin's sense by Lipschitz functions if, 
given any $\epsilon>0$,
there exist a Lipschitz function $g:H\to\setR$ and a Borel set  $A\subset H$ such that $m(H\setminus A)<\epsilon$ and $f\equiv g$ on $A$.
In Euclidean metric measure structures it is well known that this property is equivalent to an almost everywhere differentiability, 
in an approximate sense \cite[Thm.~3.1.8]{Federer}. 
A quantitative version of this Lusin-Lipschitz property, namely
\begin{equation}\label{eq:1}
|f(x)-f(y)|\leq d(x,y)\bigl( g(x)+g(y)\bigr)\qquad\text{for some nonnegative $g\in L^p(H,m)$}
\end{equation}
holds for $W^{1,p}$ functions, $p\in (1,\infty)$, in Euclidean spaces (see \cite{liu}). This property, and its adaptation to
$W^{1,1}$ or even $BV$ functions, had in recent years a remarkable range
of applications: lower semicontinuity of integral functionals depending on vector-valued maps \cite{acefu}, rectifiability results in the
theory of currents \cite{AmbrosioKirchheim},
quantitative stability results for flows associated to Sobolev vector fields \cite{crippade}, optimal bounds in the matching problem
\cite{ast}. It is also well-known that in the class of metric measure spaces $(H,d,m)$ satisfying the doubling and $1$-Poincar\'e
inequality, the property \eqref{eq:1}
\textit{characterizes} $W^{1,p}$ functions, while for general metric measure structures 
it is the basis of the definition of the so-called Hajlasz Sobolev functions
(see e.g. \cite{Heinonen}). 

The aim of this paper is to extend this Lipschitz approximation result to some classes of non-doubling metric measure structures. All proofs available so far rely on the doubling condition, and this precludes the possibility to prove the Lusin-Lipschitz
property in Gaussian spaces, not doubling even when they are finite-dimensional (see however \cite{Shapo} for an infinite dimensional result on approximation of vector fields by gradients of Lipschitz functions). Our result, instead,
covers Sobolev functions in Gaussian spaces according to Da Prato \cite{daP}, the Sobolev functions of Malliavin
calculus in Wiener spaces,
and the Sobolev functions in the class of $\RCD(K,\infty)$ metric measure structures introduced in \cite{AGS14}, now object of many
investigations.

In Euclidean spaces the proof of \eqref{eq:1} can be achieved 
writing $f$ as a singular integral
\begin{equation}\label{eq:2}
f(x)=-\int \langle \nabla f(y),\nabla_x G(x,y)\rangle\,dy
\end{equation}
with $G$ fundamental solution of Laplace's operator $\Delta$. In metric measure structures lacking the smoothness
necessary to write \eqref{eq:2}, the strategy is to compare $f$ with a regularization $f_r$: for instance 
$f_r(x)$ could be the mean value of $f$ in the ball $B_r(x)$. Choosing
$r\sim d(x,y)$, $f_r(x)$ is comparable to $f_r(y)$ and the problem reduces to the pointwise estimate of $f(x)-f_r(x)$.
This estimate involves Hardy-Littlewood's maximal function $M(|\nabla f|)(x)$, so that $g\sim M(|\nabla f|)$ and $L^p$ integrability
of $g$ immediately follows by the maximal theorem.  

Clearly, these strategies seem to fail when $m$ is not doubling.
In a (potentially or actually) infinite-dimensional setting, our method is a combination of the two, but uses the semigroup
$R_t$ associated to the Sobolev class $W^{1,2}$ instead of the inversion of Laplace's operator:
our regularization is $f_t=R_t f$, and $t$ will be chosen equal
to $d^2(x,y)$. It follows that we need to estimate
\begin{equation}\label{eq:amb1}
|f(x)-f(y)|\leq |f(x)-R_tf(x)|+|R_t f(x)-R_t f(y)|+|R_tf(y)-f(y)|.
\end{equation}
Roughly speaking the estimates of all terms involve $|\nabla f|$, but while the estimate of the oscillation $|R_t(x)-R_t(y)|$ involves
mostly the curvature properties of the metric measure space, the estimate of $f-R_t f$ is more related to the regularity
of the transition probabilities $p_t(x,y)$ of $R_t$. To illustrate this, we may look at this computation, where $L$ and $\int_H\Gamma(f,g) dm$
are respectively the
infinitesimal generator of $R_t$ and the Dirichlet form associated to $R_t$:  
\begin{eqnarray*}
R_tf(x)-f(x)&=&\int_0^tLR_s f(x) ds=\int_0^t \int_H p_s(x,y) Lf(y) dm(y)ds\\
&=&-\int_0^t\int_H\Gamma(f,p_s(x,\cdot)) dm ds=-2\int_0^t\int_H\Gamma(f,\sqrt{p_s}(x,\cdot))\sqrt{p_s}(x,\cdot) dm ds\\&\leq&
2\int_0^t\bigl(R_s\Gamma(f,f)\bigr)^{1/2}\bigl(\int_H\Gamma(\sqrt{p_s}(x,\cdot),\sqrt{p_s}(x,\cdot))dm\bigr)^{1/2} ds.
\end{eqnarray*}
Even if we knew that $\bigl(\int_H\Gamma(\sqrt{p_s}(x,\cdot),\sqrt{p_s}(x,\cdot))dm\bigr)^{1/2}$ is integrable in $(0,t)$, this would give an estimate with $g\sim \sup_{s>0}\sqrt{R_s\Gamma(f,f)}$ which
would not be enough to deal with $W^{1,2}$ functions, because $\Gamma(f,f)\in L^1(H,m)$ and
 weak-$L^1$ estimates on the operator $\sup_{s>0} |R_s|$
are not available in general. We modify this approach using the formula (see \eqref{fractional estimate})
$$
R_tv-v=\int_0^\infty K(s,t) R_s \sqrt{-L} v \ d s \quad \quad  \forall v\in D(\sqrt{-L}), \quad \forall t\geq 0
$$
for a suitable kernel $K$. This formula
provides the correct integrability estimates, at the price of working with the nonlocal operator $\sqrt{-L}$.

In \autoref{th: Lipschitz approximation scalar} we state how \eqref{eq:1} reads for the three structures of our interest. Since the regularizing properties
of the semigroup are slightly different, the proofs and the statement slightly differ in the three cases. Nevertheless, since
in all the cases the transition probabilities $p(x,\cdot)\in L^1(H,m)$ are naturally defined for \textit{all} $x\in H$, 
we use the induced \textit{pointwise} defined version $R_tf(x)=\int_Hf(y)p_t(x,y) dm(y)$ having, as we illustrate, extra
regularity properties.

Eventually, having in mind the application of the estimate \eqref{eq:1} to vector-valued maps, we provide also
the vector valued counterpart of \autoref{th: Lipschitz approximation scalar}, when
the target is an Hilbert space $E$.

In the final part of the paper we apply the vector-valued version of \eqref{eq:1} to provide an extension to the Gaussian and
Wiener settings of \textit{quantitative} stability results for flows associated to $W^{1,p}$ vector fields $b$. We recover the uniqueness results for flows in Wiener spaces \cite[Thm.~3.1]{ambrosio-fig}, with the exception of the case $p=1$ (and of the $BV$ case in \cite{trevisan-bv}). In Da Prato's setting, we obtain a result that quantitatively improves \cite[Thm.~2.3]{daprato-flandoli-rockner} where they consider the problem of uniqueness of (probability-valued) solutions to the continuity equation. This can be reduced to that of generalized flows by means of a suitable lift using a ``superposition principle'' such as in \cite[Thm.~8.2.1]{ags} (see also \cite[Sec.~3]{trevisan-stepanov} for a similar result in more general settings). One could also obtain quantitative bounds in terms of suitable transportation distances, following e.g.\ the approach in \cite{luo} (in a stochastic setting). We leave to future research possible applications in the theory of flows in $\RCD$ spaces, for which well-posedness and stability, not in quantitative form, are established respectively in \cite{ambrosio-trevisan} and \cite{ambrosio-stra}.

\smallskip 
\noindent
{\bf Acknowledgement.} The first and third authors are grateful to V.\ Bogachev, G.\ Da Prato and M.\ Ledoux  for useful information on this subject.

\section{Notation and preliminary results}

\subsection{Abstract semigroup tools}

Throughout this subsection $G$ denotes a separable Hilbert space.

\begin{proposition}\label{prop: fractional estimate}
Let $R_t=e^{tL}$ be a continuous semigroup acting on $G$, with infinitesimal generator
$L: D(L)\subset G\rightarrow G$.  If $-L$ is a positive selfadjoint operator, one has the representation formula
\begin{equation}\label{fractional estimate}
R_tv-v=\int_0^\infty K(s,t) R_s \sqrt{-L} v \ d s \quad \quad  \forall v\in D(\sqrt{-L}), \quad \forall t\geq 0,
\end{equation}
(understanding the integral in Bochner's sense) for a suitable kernel
$K:\setR_+\times \setR_+\rightarrow \setR$ independent of $R_t$ and satisfying
\begin{equation}\label{eq:goodkernel}
\int_0^\infty |K(s,t)| d s= \frac{4}{\sqrt{\pi}} \sqrt{t} \quad\quad \forall t\geq 0.
\end{equation}
\end{proposition}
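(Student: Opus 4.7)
The plan is to reduce the statement to a scalar identity via the spectral theorem and then invert a Laplace transform to obtain the explicit kernel.

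Since $-L$ is positive selfadjoint, the spectral theorem provides a resolution of the identity $\{E_\lambda\}_{\lambda\geq 0}$ such that $-L=\int_0^\infty \lambda\, dE_\lambda$, $R_t=\int_0^\infty e^{-t\lambda}\, dE_\lambda$, and $\sqrt{-L}=\int_0^\infty \sqrt{\lambda}\, dE_\lambda$ on $D(\sqrt{-L})$. Hence for $v\in D(\sqrt{-L})$, testing \eqref{fractional estimate} against the spectral measure of $v$ shows that the identity is equivalent to the pointwise scalar identity
\begin{equation*}
e^{-t\lambda}-1 = \sqrt{\lambda}\int_0^\infty K(s,t)\, e^{-s\lambda}\, ds\qquad\forall\,\lambda\geq 0,\ \forall\,t\geq 0.
\end{equation*}
Thus it is enough to find a kernel $K(\cdot,t)$ whose Laplace transform (in $s$) equals $\lambda\mapsto (e^{-t\lambda}-1)/\sqrt{\lambda}$; any such $K$ will automatically be independent of the semigroup $R_t$.

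Starting from the classical formula $\int_0^\infty s^{-1/2} e^{-s\lambda}\, ds=\sqrt{\pi/\lambda}$, which after a translation yields $\int_t^\infty (s-t)^{-1/2} e^{-s\lambda}\, ds=\sqrt{\pi/\lambda}\, e^{-t\lambda}$, I would guess and then verify the explicit choice
\begin{equation*}
K(s,t)=\frac{1}{\sqrt{\pi}}\Bigl[\mathbf{1}_{\{s>t\}}(s-t)^{-1/2} - s^{-1/2}\Bigr].
\end{equation*}
Subtracting the two Laplace transforms above confirms the required identity. A direct computation of the $L^1$ norm, exploiting the cancellation of the two divergent tails on $(t,\infty)$, gives
\begin{equation*}
\int_0^\infty |K(s,t)|\, ds = \frac{1}{\sqrt{\pi}}\int_0^t s^{-1/2}\, ds + \frac{1}{\sqrt{\pi}}\int_t^\infty\bigl[(s-t)^{-1/2}-s^{-1/2}\bigr]\, ds = \frac{2\sqrt{t}}{\sqrt{\pi}}+\frac{2\sqrt{t}}{\sqrt{\pi}}=\frac{4\sqrt{t}}{\sqrt{\pi}},
\end{equation*}
which is \eqref{eq:goodkernel}.

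Finally I would address the Bochner integrability in \eqref{fractional estimate}: since $-L$ is positive selfadjoint, $R_s$ is a contraction on $G$, so $\|R_s\sqrt{-L}v\|_G\leq \|\sqrt{-L}v\|_G$ for every $s\geq 0$, and combining with $K(\cdot,t)\in L^1(0,\infty)$ the integrand is Bochner integrable. The hard part is essentially finding the right $K$; once one recognises that the scalar problem is Laplace inversion, the derivation is straightforward, and the only subtle point is handling the non-integrable singularities at $s=0$ and $s=t$ via the cancellation displayed above.
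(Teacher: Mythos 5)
Your proposal is correct and follows essentially the same route as the paper: spectral reduction to the scalar Laplace-transform identity $e^{-t\lambda}-1=\sqrt{\lambda}\int_0^\infty K(s,t)e^{-s\lambda}\,ds$, the same explicit kernel built from $\int_0^\infty s^{-1/2}e^{-s\lambda}\,ds=\sqrt{\pi/\lambda}$ and its translate, and the same $L^1$ computation exploiting the sign split on $(0,t)$ and $(t,\infty)$. Your brief remark on Bochner integrability via the contractivity of $R_s$ is a small addition the paper handles slightly differently (through $\int_0^\infty\sqrt{\lambda}\,dE(\lambda)v<\infty$), but the argument is the same in substance.
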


%\begin{remark}\label{shift by constant}
%Applying \eqref{fractional estimate} to the semigroup $e^{-t}R_t$, whose generator is $L-1$, we obtain the identity
%\begin{equation}\label{eq:shifted by constant} e^{-t}R_tv- v= \int_0^\infty K(s,t) e^{-s} R_s \sqrt{1-L} v \ d t.\end{equation}
%\end{remark}

\begin{proof} We claim that
\begin{equation}\label{elementary identity}
e^{-bt}-1=\int_0^\infty K(s,t) \sqrt{b}e^{-bs} d s, \quad \quad \forall t\geq 0,\quad \forall b\geq 0
\end{equation}
for some kernel $K$ satisfying \eqref{eq:goodkernel}. For $b>0$ (if $b=0$ \eqref{elementary identity} is obvious), we use the identity
\begin{equation}\label{a1}
e^{-bt}=\frac{1}{\sqrt{\pi}}\int_t^{\infty} \frac{1}{(s-t)^{1/2}}\sqrt{b}e^{-bs} d s, \quad  \quad \forall t\in \setR,
\end{equation}
that follows immediately by
\begin{align*}
\int_t^{\infty} \frac{e^{-bs}}{(s-t)^{1/2}} d s =& e^{-bt}\int_t^{\infty} \frac{e^{-b(s-t)}}{(s-t)^{1/2}} d s\\
=&\frac{ e^{-bt}}{\sqrt{b}}\int_0^{\infty} \frac{e^{-s}}{\sqrt{s}} d s\\
=&\frac{e^{-bt}}{\sqrt{b}}\sqrt{\pi},
\end{align*}
where we have used the well known identity $\int_0^\infty \frac{e^{-s}}{\sqrt{s}} d s=\sqrt{\pi}$.
Using \eqref{a1} we find
$$
e^{-bt}-1=e^{-bt}-e^{-b0}=
\int_{\setR}\frac{1}{\sqrt{\pi}}\left( \frac{\chi_{s>t}}{(s-t)^{1/2}}-\frac{\chi_{s>0}}{s^{1/2}}\right) \sqrt{b}e^{-bs} d s,
$$
so that, setting  
$$
K(s,t):=\frac{1}{\sqrt{\pi}}\left( \frac{\chi_{s>t}}{(s-t)^{1/2}}-\frac{\chi_{s>0}}{s^{1/2}}\right) 
$$
we obtain \eqref{elementary identity}.

Now, to complete the proof of the claim, we have to check that $\int_0^\infty |K(s,t)| d s
=\frac{4}{\sqrt{\pi}} \sqrt{t}$ for every $t\geq 0$ (the case $t=0$ is obvious). Indeed, for $t>0$ we have
\begin{align*}
\int_0^\infty |K(s,t)| d s =& \frac{1}{\sqrt{\pi}}\int_0^t \frac{1}{s^{1/2}} d s+\frac{1}{\sqrt{\pi}} \int_t^{\infty} \frac{1}{(s-t)^{1/2}}-\frac{1}{s^{1/2}} d s\\
=& \frac{2}{\sqrt{\pi}}\sqrt{t}+\frac{2}{\sqrt{\pi}}( (s-t)^{1/2}-s^{1/2})|^{s=\infty}_{s=t}\\
=& \frac{4}{\sqrt{\pi}} \sqrt{t}.
\end{align*}
% This concludes the proof.
Using standard notions of functional calculus we can write
$$
R_t=\int_0^\infty e^{-\lambda t} d E(\lambda)\qquad\forall t>0,
$$
where $E$ is the spectral measure associated to $L$. For $v\in D(\sqrt{-L})$, from \eqref{elementary identity} we obtain
\begin{align*}
R_tv-v=& \int_0^\infty( e^{-\lambda t}-1) d  E(\lambda)v\\
=& \int_0^\infty \int_0^\infty K(s,t) \sqrt{\lambda}e^{-\lambda s} d s d E(\lambda)v\\
=& \int_0^\infty K(s,t) \int_0^{\infty}\sqrt{\lambda}e^{-\lambda s}  d E(\lambda)v d s,
\end{align*}
where all integrals are well defined since $v\in D(\sqrt{-L})$ implies $\int_0^\infty\sqrt{\lambda} d E(\lambda)v<\infty$.
We finally observe that 
$$
\int_0^\infty\sqrt{\lambda}e^{-\lambda s}  d E(\lambda)v=R_s \sqrt{-L}v=\sqrt{-L}R_s v \quad\quad\quad \forall v\in D(\sqrt{-L}),
$$
that concludes the proof.
\end{proof}

We now particularize the previous result to the case of Markov symmetric semigroups (see e.g. \cite[pg.~65]{Stein}, 
\cite{bgl}).
Let $(X,\mathcal F,m)$ be an abstract measure space, with $m$ $\sigma$-finite, and let $R_t$ be a symmetric Markov semigroup acting on 
$G=L^2(X,\mathcal F,m)$. In this class of semigroups,  which have a canonical extension to a contraction semigroup in all
$L^p(X,\mathcal F,m)$ spaces, $1<p<\infty$, one can always find, for all $f\in G$, versions of $R_tf$, $t>0$, with the
property that $t\mapsto R_t f(x)$ is continuous (in fact, analytic) in $(0,\infty)$ for $m$-a.e. $x\in X$ (see \cite[pg.~72]{Stein} or
\cite[Thm~8.4.2]{Bogaconv} for a proof). For such continuous version, besides the Littlewood-Paley inequality \cite[pg.~74]{Stein}
\begin{equation}\label{eq:LP}
\int_X \int_0^\infty t\bigl|\frac{d}{dt}R_tf(x)\bigr|^2 dt\,d m(x)\leq \frac 14 \int_X |f|^2 d m,
\end{equation}
we shall also use the following powerful result from the theory of Markov semigroups
(see for instance \cite[pg.~73]{Stein}, or {\color{blue}\cite[Lem.~1.6.2]{bgl}.} 

\begin{theorem}[Maximal inequality]\label{thm:maximal}
For $p\in (1,\infty]$ one has, for some $C_p < \infty$,
$$
\|\sup_{t>0}R_t f\|_p\leq C_p\|f\|_p\qquad\forall f\in L^p(X,\mathcal F,m).
$$
In addition, for all $f\in L^p(X,\mathcal F,m)$, one has $R_tf\to f$
$m$-a.e. as $t\to 0^+$. 
\end{theorem}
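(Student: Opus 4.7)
I would follow the classical route of Stein and BGL: reduce the maximal estimate to a transference argument that identifies $R_t$ with a conditional expectation (Rota's dilation theorem), and then invoke Doob's $L^p$ martingale inequality; the pointwise convergence as $t\to 0^+$ would follow from the maximal inequality combined with a density argument.

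First, the endpoint $p=\infty$ is immediate, because each $R_t$ is Markovian and hence an $L^\infty$ contraction: $|R_tf(x)|\leq \|f\|_\infty$ for every $t>0$ and $m$-a.e.\ $x$, so $C_\infty=1$ works.

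For $1<p<\infty$, the heart of the argument is Rota's dilation theorem. One constructs an auxiliary probability space supporting a reversible Markov process $(Y_s)$ and a decreasing family of $\sigma$-algebras $(\mathcal F_s)$ such that, for every $t>0$, $R_{2t}f$ is realized as a conditional expectation of the form $\mathbb E[Q_tf\mid \mathcal F_t]$, where $Q_t$ is an $L^p$-contraction independent of $f$. The supremum $\sup_{t>0}|R_{2t}f|$ is then pointwise dominated by the Doob maximal function of a reverse martingale, and Doob's inequality gives the bound with a constant $C_p$ depending only on $p$. Passage from $\sup_t R_{2t}f$ to $\sup_t R_tf$ is routine using the semigroup identity $R_t=R_{t/2}R_{t/2}$ and the $L^\infty$ contraction property. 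A Rota-free alternative would combine the Hopf--Dunford--Schwartz maximal theorem for the Ces\`aro averages $A_tf=t^{-1}\int_0^t R_sf\,ds$ with the analyticity bound $\|t\,\partial_t R_tf\|_p\leq C\|f\|_p$ valid for symmetric Markov semigroups on $L^p$; writing $R_tf-A_tf$ as $\int_0^t s\,\partial_s R_sf\,\frac{ds}{s}$, its supremum is controlled by a Littlewood--Paley $g$-function that is $L^p$-bounded, yielding the same estimate.

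For the $m$-a.e.\ convergence $R_tf\to f$ as $t\to 0^+$, I would first verify it on the dense subspace $D(L)$, where $\|R_tf-f\|_p\leq t\|Lf\|_p$ implies $L^p$-convergence and hence $m$-a.e.\ convergence along a subsequence; then the maximal inequality transfers the convergence to all of $L^p(X,\mathcal F,m)$ via a standard $3\varepsilon$ argument on the oscillation $\limsup_{t\to 0^+}|R_tf-f|$, which is forced to vanish $m$-a.e.

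The main obstacle is really the construction of Rota's dilation (or, in the alternative route, the simultaneous use of analyticity of $(R_t)$ on $L^p$ and of the Hopf maximal theorem for Ces\`aro averages). Neither step is short or elementary, which is precisely why the theorem is invoked here as a black box from \cite{Stein} and \cite{bgl} rather than reproved.
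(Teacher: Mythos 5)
The paper does not prove this theorem: it is imported as a black box with references to \cite{Stein} and \cite{bgl}, exactly as your closing paragraph anticipates. Your sketch faithfully outlines the standard proofs from those sources (Rota's dilation plus Doob's inequality for reverse martingales, or Hopf--Dunford--Schwartz for the Ces\`aro averages plus a Littlewood--Paley $g$-function argument exploiting analyticity---the latter being Stein's actual route for the continuous-parameter supremum), and the density-plus-maximal-inequality argument for the $m$-a.e.\ convergence is the standard one, so the proposal is correct and consistent with precisely the arguments the paper defers to.
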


For $p \in (1, \infty)$ and $f\in L^p(X,\mathcal F,m)$, we write $f \in D_p(\sqrt{-L})$ if there exists a sequence 
$(f_n) \subset D(\sqrt{-L})$ converging to $f$ in $L^p(X, \mathcal F, m)$ with $(\sqrt{-L} f_n)$ converging to 
some function $g$ in $L^p(X, \mathcal F, m)$. A simple limit argument gives \[ \int_X f \sqrt{-L} h dm=\int_X g h dm,\] 
for all $h\in D(\sqrt{-L})\cap L^{p'}(X,\mathcal F,m)$ with $\sqrt{-L}h\in L^{p'}(X,\mathcal F,m)$.
Therefore, if this class of functions $h$ is dense in $L^{p'}(X,\mathcal F,m)$, $g$ is uniquely determined 
and we can write $g:=\sqrt{-L} f \in L^p(X, \mathcal F, m)$. In our cases of interest the density will be guaranteed
by the validity of the Riesz inequalities in the class $D_2(\sqrt{-L})$ (and then, the definition of $D_p(\sqrt{-L})$ grants
immediately their validity in the class $D_p(\sqrt{-L})$).

\begin{proposition}\label{prop: fractional estimate Markov semigroups}
For every $p \in (1, \infty)$, $f\in D_p( \sqrt{-L})$, one has $m$-a.e. continuous version of
the semigroup $R_t$ satisfying
\begin{equation}\label{fractional inequality on functions}
|R_tf(x)-f(x)|\leq  \frac{4\sqrt{t}}{\sqrt{\pi}} \sup_{s>0} |R_s\sqrt{-L}f|(x) \quad\quad \text{for $m$-a.e. $x\in X$}.
\end{equation}
\end{proposition}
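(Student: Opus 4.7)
My plan is to combine the Bochner representation formula from \autoref{prop: fractional estimate} with the maximal inequality of \autoref{thm:maximal} in order to pass from an $L^2$-valued identity to a pointwise bound, first for $f \in D(\sqrt{-L})$ and then, by approximation, for $f \in D_p(\sqrt{-L})$ with $p \in (1,\infty)$.

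First I would fix $v \in D(\sqrt{-L})$ and apply \eqref{fractional estimate}, working throughout with the jointly measurable, $t$-continuous versions of $(t,x)\mapsto R_tv(x)$ and $(s,x)\mapsto R_s\sqrt{-L}v(x)$ supplied by the discussion preceding \autoref{thm:maximal}. By \autoref{thm:maximal} applied with $p=2$, the function $M(x):=\sup_{s>0}|R_s\sqrt{-L}v|(x)$ belongs to $L^2(X,m)$ and is in particular $m$-a.e.\ finite. Combined with \eqref{eq:goodkernel}, this yields
\[
\int_0^\infty |K(s,t)|\,|R_s\sqrt{-L}v(x)|\,ds \le \frac{4\sqrt{t}}{\sqrt{\pi}}M(x) < \infty \qquad \text{for $m$-a.e.\ $x$,}
\]
so that the scalar integral $\int_0^\infty K(s,t) R_s \sqrt{-L}v(x)\,ds$ converges absolutely for $m$-a.e.\ $x$. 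A Fubini-type argument identifies this pointwise integral with the $L^2$-valued Bochner integral of \eqref{fractional estimate}, and taking absolute values inside gives \eqref{fractional inequality on functions} for $v$, with an exceptional set a priori depending on $t$. Joint $t$-continuity of both sides then lets me choose one $m$-null exceptional set valid for every $t\ge 0$ (the case $t=0$ being trivial).

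Next I would extend to $f\in D_p(\sqrt{-L})$ by approximation. Let $(f_n)\subset D(\sqrt{-L})$ satisfy $f_n\to f$ and $\sqrt{-L}f_n\to \sqrt{-L}f$ in $L^p(X,m)$. Applying the previous step,
\[
|R_tf_n(x)-f_n(x)| \le \frac{4\sqrt{t}}{\sqrt{\pi}}\sup_{s>0}|R_s\sqrt{-L}f_n|(x) \qquad \text{$m$-a.e.}
\]
By the sublinearity of the maximal operator and \autoref{thm:maximal} with exponent $p$,
\[
\Big\|\sup_{s>0}|R_s\sqrt{-L}(f_n-f)|\Big\|_p \le C_p\|\sqrt{-L}(f_n-f)\|_p \to 0,
\]
while $R_tf_n\to R_tf$ and $f_n\to f$ in $L^p$ by contractivity of $R_t$ on $L^p$. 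Extracting a subsequence along which all three convergences hold $m$-a.e.\ and letting $n\to\infty$ yields \eqref{fractional inequality on functions} for $f$.

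The main technical obstacle is the Fubini step that identifies the Bochner integral with the $m$-a.e.\ pointwise integral, since $m$ is only $\sigma$-finite and $(s,x)\mapsto |K(s,t) R_s\sqrt{-L}v(x)|$ need not be globally integrable on $(0,\infty)\times X$. I would handle this by exhausting $X$ with an increasing sequence of sets of finite $m$-measure on which $R_s\sqrt{-L}v$ is in $L^1$, or equivalently by pairing \eqref{fractional estimate} with arbitrary $h\in L^2$ supported in such a set and invoking classical Fubini on each pairing, thereby upgrading the $L^2$-identity to the desired pointwise formula on a set of full $m$-measure.
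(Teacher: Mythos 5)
Your proposal is correct and follows essentially the same route as the paper: first the case $f\in D(\sqrt{-L})$ via the representation formula of \autoref{prop: fractional estimate} together with the kernel bound \eqref{eq:goodkernel}, then the extension to $D_p(\sqrt{-L})$ by approximation using the maximal inequality of \autoref{thm:maximal}. The only stylistic difference is that you spell out the Fubini/Bochner identification and the $t$-uniformity of the null set, which the paper leaves implicit, while the paper obtains the continuous representative for general $p$ slightly more directly by observing that the maximal inequality forces $(R_tf_n(x))$ and $(R_t\sqrt{-L}f_n(x))$ to converge uniformly in $t$ for $m$-a.e.\ $x$.
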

\begin{proof} Assume first $p=2$, i.e.,  $f \in D(\sqrt{-L})$. By \autoref{prop: fractional estimate} we have
\begin{equation}\label{fractional identity on functions}
R_tf(x)-f(x)=\int_0^{\infty} K(r,t) R_s\sqrt{-L}f (x) d r, \quad\quad \text{for $m$-a.e. $x\in X$,}
\end{equation}
so that 
%We conclude with a simple application of H\"older inequalit
$$
|R_tf(x)-f(x)|\leq \int_{0}^\infty |K(r,t)| d r \sup_{s >0} |R_s \sqrt{-L} f|(x) =  \frac{4\sqrt{t}}{\sqrt{\pi}} \sup_{s>0} |R_s\sqrt{-L}f|(x). %R_s\sqrt{-L}f (x) \norm[L^1_s]{K(\cdot,t)}\norm[L^{\infty}_s]{ R_s\sqrt{-L}f (x) }\leq \frac{4\sqrt{t}}{\sqrt{\pi}} \sup_{s>0} |R_s\sqrt{-L}f|(x) = \frac{4}{\sqrt{\pi}} \sqrt{t}  \sup_{s>0} |R_s\sqrt{-L}f|(x).
$$
For a general $p \in (1, \infty)$, $f \in D_p(\sqrt{-L})$, choose a sequence $f_n \in D(\sqrt{-L})$ with $f_n \to f$, $\sqrt{-L}f_n \to \sqrt{-L f}$ in $L^p(X, \mathcal{F}, m)$. Then, the maximal inequality implies that, for $m$-a.e.\ $x \in X$, both $(R_tf_n(x))$ and $(R_t \sqrt{-L}f_n(x))$ converge uniformly with respect to $t \ge 0$. Therefore, both limits provide $m$-a.e.\ continuous representatives of the semigroup and inequality \eqref{fractional inequality on functions} holds.
\end{proof}

\subsection{Da Prato's Sobolev spaces}

A standard reference on this topic is \cite{daP}.
In this setting $X=H$ with $H$ separable Hilbert space endowed with the scalar product $\scal{\cdot}{\cdot}$ and 
$d(x,y)=|x-y|$, where $|\cdot|$ is the norm of $H$. If $m\in\Prob(H)$ is a centered and nondegenerate
Gaussian measure, we denote by $Q\in\mathscr{L}(H;H)$ the covariance operator associated to $m$; by the exponential
integrability of Gaussian measures,  $Q$ is a nonnegative symmetric operator with finite trace. 
For every vector $a\in H$ and for every $Q$ as above, we denote by $N_{a,Q}$ the unique 
Gaussian measure in $H$ with mean $a$ and covariance $Q$ (in particular we often denote $m$ by $N_Q$). 
Let $\mathcal{H}:=Q^{1/2}H$ be the Cameron-Martin space associated to $m$, endowed with the scalar product $(x,\ y)_{\mathcal{H}}:=\scal{Q^{-1/2}x}{Q^{-1/2}y}$ and the induced norm $|x|_{\mathcal{H}}:=|Q^{-1/2} x|$.
We recall the Cameron-Martin formula (see for instance \cite[Thm.~2.8]{daP})
\begin{equation}\label{eq:CM_Wiener}
\frac{dN_{v,Q}}{dN_Q}(x)=\exp\left\lbrace -\frac{1}{2}|v|^2_{\mathcal{H}}+W_{Q^{-1/2}v}(x)\right\rbrace
\quad \quad \forall v\in \mathcal{H},
\end{equation}
where $W$ is the white noise map, that could be defined starting from the linear operator
$$
\tilde{W}: \mathcal{H}\subset H\to L^2(H,m),\qquad \tilde{W}_h(\cdot):=\scal{Q^{-1/2}h}{\cdot},
$$
and using $\|\tilde{W}_h\|_2=|h|$ to extend it to the whole of $H$. Such an extension satisfies $\|\tilde{W}_h\|_2=|h|$ for every 
$h\in H$ and is linear w.r.t. $h$. Moreover the white noise map is exponentially integrable (see \cite[Prop.~1.30]{daP}), precisely
\begin{equation}\label{exponential integrability W}
\int_H e^{W_h(x)} dm(x) =e^{\frac{1}{2}|h|}\qquad\quad \forall h\in H.
\end{equation}
For $1\leq p<\infty$, we now consider the Sobolev space $W^{1,p}(H,m)$ obtained as the closure of smooth cylindrical
 functions with respect to the norm
$$
\norm[W^{1,p}]{u}:=\norm[L^p]{u}+\norm[L^p]{\nabla u}.
$$

In this context the natural semigroup is given by Mehler's formula
\begin{equation}\label{eq:defP_t}
P_tf(x):=\int_H f(e^{At}x+y) dN_{Q_t}(y)=\int_H f(y) d N_{e^{At}x,Q_t}(y),
\end{equation}
where we have set $A:=-\frac{1}{2}Q^{-1}$ (that is an unbounded operator) and 
$$
Q_t:=\int_0^te^{2As} d s=Q(1-e^{2At}).
$$ 
The semigroup $P_t$ is the $L^2(H,m)$ gradient flow associated to the energy 
$\frac{1}{2}\int_H |\nabla u|^2 d m$.

 We shall also use a particular case of
 Cameron-Martin formula
\begin{equation}\label{eq:Cameron_Martin}
\frac{dN_{e^{At}x,Q_t}}{dN_{Q_t}}=
\exp\left\lbrace -\frac{1}{2}|\Gamma_tx|^2+W^t_{\Gamma_tx}(\cdot)\right\rbrace:=\rho_t(x,\cdot)
\quad\text{$N_{Q_t}$-a.e.,}
\end{equation}
for all $x\in H$, where $W^t$ is the white noise map in the Gaussian space $(H, N_{Q_t})$ and
$$
\Gamma_t:=Q^{-1/2}(1-e^{2At})^{-1/2}e^{At}.
$$

Since we aim at pointwise statements, it is important to look, whenever this is possible, for a precise version
of the semigroup. For $P_t$ this is not a problem, since one can use directly \eqref{eq:defP_t} to specify
$P_tf$ pointwise; in addition, since $N_{e^{At}x,Q_t}\ll m$ for all $x\in X$, one has
$P_tf(x)=P_tg(x)$ whenever $t>0$ and $f=g$ $m$-a.e. in $H$. Moreover we have a simple explicit formula for the density $p_t(x,\cdot)$
of  $N_{e^{At}x,Q_t}$ with respect to $m$:
\begin{equation}\label{Da Prato kernel}
p_t(x,\cdot):=\frac{dN_{e^{At}x,Q_t}}{dm}=\frac{dN_{e^{At}x,Q_t}}{dN_{Q_t}}\frac{1}{\sqrt{\text{det}(1-e^{2At})}}\exp\left\lbrace- \frac{1}{2}|\Gamma_t y|^2\right\rbrace,
\end{equation}
see \cite[Lemma.~10.3.3]{daPZ} for a proof.

\begin{theorem} \label{thm:verygood} For any $p\in (1,\infty]$ and $f\in L^p(H,m)$, one has
that $P_tf\in C^\infty(H)$ for every $t\in (0,\infty)$. Moreover the map $t\to P_tf(x)$ is continuous in $(0,\infty)$ for every $x\in H$.
\end{theorem}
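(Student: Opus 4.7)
The plan is to work directly from Mehler's formula, writing
$$P_t f(x) = \int_H f(y)\, p_t(x, y)\, d m(y)$$
with $p_t(x, \cdot)$ given by the explicit expression \eqref{Da Prato kernel}. All the dependence on the basepoint $x$ enters through the Cameron-Martin density $\rho_t(x, y) = \exp\bigl(-\tfrac12 |\Gamma_t x|^2 + W^t_{\Gamma_t x}(y)\bigr)$ from \eqref{eq:Cameron_Martin}, so the crucial preliminary remark is that for every fixed $t > 0$ the operator $\Gamma_t = Q^{-1/2}(1 - e^{2At})^{-1/2} e^{At}$ is a bounded linear map on $H$: in the spectral basis of $Q$ the smoothing factor $e^{At}$ kills the singularity of $Q^{-1/2}$ for large indices, while the factor $(1-e^{2At})^{-1/2}$ remains bounded away from zero on the high-eigenvalue components. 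Consequently, for every $y \in H$, the map $x \mapsto p_t(x, y)$ is smooth (actually analytic), since its logarithm is a polynomial of degree two in $x$ with $y$-dependent coefficients produced by the white noise map $W^t$.

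The first step is to compute the Fr\'echet derivatives of $p_t$ in $x$ explicitly, obtaining for each $k \geq 0$ and $\xi_1, \ldots, \xi_k \in H$ an identity of the form
$$\partial^k_{\xi_1 \cdots \xi_k}\, p_t(x, y) = P_k(x, y;\, \xi_1, \ldots, \xi_k)\, p_t(x, y),$$
where $P_k$ is a polynomial of degree at most $k$ in the centered Gaussian variables $y \mapsto W^t_{\Gamma_t \xi_j}(y)$, with coefficients depending polynomially on $x$ through the inner products $\langle \Gamma_t x, \Gamma_t \xi_j\rangle$. The key analytic estimate to prove is then that $\|\partial^k_{\xi_1 \cdots \xi_k}\, p_t(x, \cdot)\|_{L^{p'}(H, m)}$ stays locally bounded in $x$, for every $p \in (1, \infty]$. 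This follows from the exponential integrability of the white noise \eqref{exponential integrability W}: raising $p_t(x, \cdot)$ to a power $q = p'$ produces a density with exponent still linear in $W^t$, and H\"older's inequality combines this with the polynomial factor $P_k$, whose Gaussian moments of any order are finite and locally bounded in $x$.

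Once these locally uniform $L^{p'}$ bounds are available, standard differentiation under the integral sign, combined with H\"older's inequality against $f \in L^p(H, m)$, yields $P_t f \in C^k(H)$ for every $k$, hence $P_t f \in C^\infty(H)$. The continuity of $t \mapsto P_t f(x)$ on $(0, \infty)$ for each fixed $x \in H$ is obtained along similar lines: the objects $\Gamma_t$, $\det(1 - e^{2At})^{-1/2}$ and $W^t$ all depend continuously on $t \in (0, \infty)$ in the appropriate senses, so \eqref{Da Prato kernel} gives $p_t(x, \cdot) \to p_{t_0}(x, \cdot)$ in $L^{p'}(H, m)$ as $t \to t_0 \in (0, \infty)$, by dominated convergence and another H\"older estimate. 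The principal obstacle is precisely this uniform bookkeeping on compact subintervals of $(0, \infty)$: one must check that $\|\Gamma_t\|_{H \to H}$ and the prefactor $\det(1 - e^{2At})^{-1/2}$ stay bounded as $t$ varies in such intervals, and that the exponential moments of $W^t_{\Gamma_t \xi}(\cdot)$ can be dominated uniformly, but each of these points reduces to routine Gaussian computations once the spectral decomposition of $Q$ is used.
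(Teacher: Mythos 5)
Your proposal is correct and follows essentially the same route as the paper, which for the $C^\infty$ statement simply cites \cite[Thm.~10.3.5]{daPZ} (a proof by differentiating the Cameron--Martin density under the integral sign, exactly as you sketch) and observes that the continuity in $t$ follows from the explicit kernel \eqref{Da Prato kernel}. The only point worth making explicit in your write-up is that the pointwise expression $W^t_{\Gamma_t x}(y)=\langle Q_t^{-1}e^{At}x,\,y\rangle$ is legitimate for \emph{every} $y\in H$ because $Q_t^{-1}e^{At}=Q_t^{-1/2}\Gamma_t$ is a bounded operator for $t>0$; this is what makes your claim that $\log p_t(\cdot,y)$ is a genuine degree-two polynomial in $x$ literally true for all $y$, rather than only $m$-a.e.
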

For a proof of $P_tf\in C^{\infty}(H)$ we refer to \cite[Thm.~10.3.5]{daPZ} (see also \cite[Thm~8.16]{daP} for the case $p=\infty$).
The continuity of $t\to P_tf(x)$ in $(0,\infty)$, for every $x\in H$, can be easily checked using the formula \eqref{Da Prato kernel} and the identity
$$
P_tf(x)=\int_H f(y) p_t(x,y) dm(y).
$$

A simple consequence of \autoref{thm:verygood} is that 
$P_{s+t}f=P_s (P_tf)$ for all $f\in L^p(H,m)$ in the pointwise sense.

For $f\in W^{1,p}(H,m)$, $1<p\leq\infty$, we shall also use the contractivity property
\begin{equation}\label{eq:contra_Pt}
|\nabla P_t f(x)|\leq e^{-t/(2\lambda_Q)} P_t|\nabla f(x)|\leq P_t|\nabla f|(x)\qquad\forall x\in X,
\end{equation}
where $\lambda_Q$ is the largest eigenvalue of $Q$, and the Riesz inequalities (see
\cite[Thm.~5.2]{Chojinowska-Michalik-Goldys})
\begin{equation}\label{Riesz1}
\norm[L^p]{\nabla f}\leq c_p \norm[L^p]{\sqrt{I-L}f},
\end{equation}
\begin{equation}\label{Riesz2}
\norm[L^p]{\sqrt{I-L}f}\leq c_p\norm[W^{1,p}]{f},
\end{equation}
where $L$ is the infinitesimal generator of $P_t$.

\subsection{Sobolev functions on the Wiener space}

If $H$, $Q$ and $m$ are defined as in the previous subsection, 
%\begin{equation}\label{eq:CM_Wiener}
%\frac{dN_{v,Q}}{dN_Q}(x)=\exp\left\lbrace -\frac{1}{2}|v|^2_{\mathcal{H}}+W_{Q^{-1/2}v}(x)\right\rbrace
%\quad \quad \forall v\in \mathcal{H}.
%\end{equation}
in this context, the definition of Sobolev
space $W_{\mathcal{H}}^{1,p}(H,m)$, $1<p<\infty$, takes into account only the derivative along Cameron-Martin 
directions and weights it differently, compared to $W^{1,p}(H,m)$: for every smooth and cylindrical $f:H\rightarrow \setR$ 
and $x\in H$ we consider the linear operator
$$
D_{\mathcal{H}}f(x):\mathcal{H}\rightarrow \setR\quad \quad \quad v\mapsto \frac{\partial f}{\partial v}(x);
$$
we can identify $D_{\mathcal{H}}f(x)$ with a vector in $\mathcal{H}$ (that we still denote by $D_{\mathcal{H}}f(x)$)  by means of the scalar product $(\cdot\ ,\cdot)_{\mathcal{H}}$. Finally we define the Sobolev space $W_{\mathcal{H}}^{1,p}(H,m)$ by completing 
smooth cylindrical functions w.r.t. the norm
$$
\norm[W_{\mathcal{H}^{1,p}}]{u}:=\norm[L^p]{u}+\norm[L^p]{|D_{\mathcal{H}} u|_{\mathcal{H}}}.
$$
It is not difficult to see that $|D_{\mathcal{H}} u|_{\mathcal{H}}=|Q^{1/2}\nabla u|_H$ for all $u\in W^{1,p}(H,m)$, and thus $W^{1,p}\subset W_{\mathcal{H}}^{1,p}$. In this context Mehler's formula reads
$$
T_t f(x):= \int_H f(e^{-t}x+\sqrt{1-e^{-2t}}y) d m(y)
$$
and $T_t$ is the $L^2(H,m)$-gradient flow associated to the energy 
$\frac{1}{2}\int_H |D_{\mathcal{H}} u|^2_{\mathcal{H}} dm$.  We shall also use the commutation
property 
\begin{equation} \label{eq:commutation_Tt} 
\langle D_{\mathcal{H}} T_tf(x),v\rangle =e^{-t} T_t \langle D_{\mathcal{H}} f,v\rangle(x)
\qquad\text{for $m$-a.e. $x\in H$.} 
\end{equation} 
for all $v\in\mathcal{H}$. A standard reference on this topic is \cite{Ustunel}.

As we did for $P_t$, we still use Mehler's formula to have a pointwise defined version of the semigroup of $T_t$ which
satisfies, as easily seen, the pointwise semigroup property $T_s\circ T_tf(x)=T_{s+t}f(x)$. In addition (see for instance
\cite[Page~237]{Bogaconv}), a monotone class argument shows that if $f$ is Borel and $2$-summable, then %with $p\in (1,\infty)$, then 
$t\mapsto T_tf(x)$ is continuous in $(0,\infty)$ and converges to $f(x)$ as $t\to 0$ for $m$-a.e. $x\in H$.
However, one of the main differences with respect to $P_t$ is that, by the lack of absolute continuity
of the shifted measure, $f=g$ $m$-a.e. in $H$ does not imply $T_tf(x)=T_t g(x)$ (while it implies
$T_tf=T_tg$ $m$-a.e. on $X$). We will also need the
following result, analogous to \autoref{thm:verygood}.

\begin{theorem} \label{thm:verygood1} For any $p\in (1,\infty]$, $t>0$ and $f:H\to\setR$ Borel with
$\int_H |f|^p dm<\infty$ the following property
holds: for $m$-a.e. $x_0\in H$ the restriction of the function $T_tf$ to $x_0+\mathcal{H}$ is continuous, finite and 
everywhere Gateaux differentiable. In addition $s\mapsto T_tf(x_0+sh)$ is of class $C^1$ in $\setR$ for all
$h\in\mathcal{H}$.
\end{theorem}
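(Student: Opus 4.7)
The plan is to use the Cameron-Martin formula to shift the $v$-dependence from the argument of $f$ onto an explicit Radon-Nikodym density, and then to differentiate under the integral using the exponential integrability of the white noise. Concretely, for each $v\in\mathcal{H}$, I would perform the translation $y\mapsto y+\tfrac{e^{-t}}{\sqrt{1-e^{-2t}}}v$ in Mehler's formula for $T_tf(x_0+v)$, obtaining
\[
T_tf(x_0+v)=\int_H f(e^{-t}x_0+\sqrt{1-e^{-2t}}\,z)\,\rho_v(z)\,dm(z),
\]
where $\rho_v$ is the density of $N_{w,Q}$ with respect to $m=N_Q$ for $w:=\tfrac{e^{-t}}{\sqrt{1-e^{-2t}}}v\in\mathcal{H}$, so that by \eqref{eq:CM_Wiener}
\[
\rho_v(z)=\exp\bigl\{-\tfrac12|w|_{\mathcal{H}}^2+W_{Q^{-1/2}w}(z)\bigr\}.
\]

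Since $W_{Q^{-1/2}w}$ is centered Gaussian with variance $|w|_{\mathcal{H}}^2$, the exponential integrability \eqref{exponential integrability W} gives $\|\rho_v\|_{L^{p'}(m)}=\exp\bigl(\tfrac{p'-1}{2}|w|_{\mathcal{H}}^2\bigr)$, locally bounded in $v$ (and in the case $p=\infty$ one reduces to any finite exponent since $m(H)=1$). Hölder's inequality then yields
\[
|T_tf(x_0+v)|\le\bigl(T_t|f|^p(x_0)\bigr)^{1/p}\|\rho_v\|_{L^{p'}(m)},
\]
and since $\int_H T_t|f|^p\,dm=\|f\|_p^p<\infty$, a single $m$-negligible set $N$ (independent of $v$) contains all $x_0$ for which this bound fails for some $v$. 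For every $x_0\notin N$ the right-hand side is finite for all $v\in\mathcal{H}$, giving finiteness of $T_tf$ on $x_0+\mathcal{H}$.

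For such $x_0$, both continuity and Gateaux differentiability on $x_0+\mathcal{H}$ follow by dominated convergence applied to the above integral. For $v_n\to v$ in $\mathcal{H}$, pointwise $m$-a.e. convergence of $\rho_{v_n}$ to $\rho_v$ (up to subsequences) together with convergence of the explicit $L^{p'}(m)$-norms yields $L^{p'}(m)$-convergence, and since $z\mapsto f(e^{-t}x_0+\sqrt{1-e^{-2t}}z)$ lies in $L^p(m)$ by $T_t|f|^p(x_0)<\infty$, this delivers continuity. For the directional derivative at $v$ along $h\in\mathcal{H}$, the linearity of $h\mapsto W_h$ in its index gives
\[
\partial_s\rho_{v+sh}(z)=\Bigl(-\tfrac{e^{-2t}}{1-e^{-2t}}\bigl((v,h)_{\mathcal{H}}+s|h|_{\mathcal{H}}^2\bigr)+\tfrac{e^{-t}}{\sqrt{1-e^{-2t}}}W_{Q^{-1/2}h}(z)\Bigr)\rho_{v+sh}(z),
\]
whose polynomial-in-a-Gaussian multiplier lies in every $L^q(m)$, so the product stays in $L^{p'}(m)$ with bounds uniform for $(s,v)$ in bounded sets. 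Dominated convergence then justifies differentiation under the integral and, applied once more, the continuity of $s\mapsto\tfrac{d}{ds}T_tf(x_0+sh)$, yielding simultaneously everywhere-Gateaux-differentiability on $x_0+\mathcal{H}$ and the $C^1$ regularity of $s\mapsto T_tf(x_0+sh)$.

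The main obstacle, although ultimately solved by the explicit Gaussian computation, is to ensure that the $m$-negligible exceptional set in $x_0$ is independent of $v\in\mathcal{H}$. This is precisely why one routes all the $v$-dependence through $\rho_v$, whose $L^{p'}(m)$-norm is available in closed form and locally uniform in $v$, and imposes only the intrinsic pointwise condition $T_t|f|^p(x_0)<\infty$ on the base point, so that a single full-measure set of good $x_0$'s serves all $v\in\mathcal{H}$ at once.
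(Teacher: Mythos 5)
Your proposal is correct and follows essentially the same route as the paper: Cameron--Martin to move the $\mathcal{H}$-translation onto an explicit density $\rho_v$, H\"older against the pointwise condition $T_t|f|^p(x_0)<\infty$ (which defines the single exceptional set, independent of $v$), exponential integrability of the white noise to control $\|\rho_v\|_{L^{p'}}$, and dominated convergence to justify continuity, differentiation under the integral, and the $C^1$ property of $s\mapsto T_tf(x_0+sh)$. No substantive differences to report.
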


\begin{proof} Let us fix a time parameter $t>0$. We assume without loss of generality that $f(x)\geq 0$ for every $x\in H$ and we set 
	$$
	N:=\left\lbrace x |\ T_t f^p(x)=\infty \right\rbrace,
	$$ 
that is an $m$-negligible set since $T_t f^p\in L^1(H,m)$. 
Let us fix $x\in H\setminus N$, we first prove $T_t f(x+h)$ is finite and continuous at $h$ for every $h\in \mathcal{H}$. 
By Cameron-Martin formula \eqref{eq:CM_Wiener} and H\"older inequality, we have
\begin{align*}
T_tf(x+h)= & \int_H f\left(e^{-t}x+\sqrt{1-e^{-2t}}\left(y+\frac{e^{-t}}{\sqrt{1-e^{-2t}}}h\right)\right) dm(y)\\
=& \int_H f(e^{-t}x+\sqrt{1-e^{-2t}}y)\rho(y,h,t)d m(y)\\
\leq & (T_t f^p(x))^{1/p} \| \rho(\cdot,h ,t) \|_{L^{p'}},
\end{align*}
where
$$
\rho(y,h,t)=\exp\left\lbrace -\frac{1}{2}a_t^2|h|_{\mathcal{H}}^2+a_tW_{Q^{-1/2}h}(y)\right\rbrace,\quad \quad \text{and}\quad\quad  a_t:=\frac{e^{-t}}{\sqrt{1-e^{-2t}}}.
$$
From \eqref{exponential integrability W} we obtain
$$
\| \rho(\cdot,h ,t) \|_{L^{p'}}\leq \left(\int_H e^{p'a_tW_{Q^{-1/2}h}} dm(y)\right)^{1/p'}=e^\frac{a_t |h|_{\mathcal{H}}}{2}<\infty,
$$
and we easily deduce that $T_tf(x+h)<\infty$ and that the map $\mathcal{H}\to L^{p'}(H,m)$, 
$h\to \rho(\cdot,h,t)$ is continuous. This immediately implies the continuity of $T_t f(x+h)$ at every $h\in \mathcal{H}$.

We now prove that $T_tf$ is differentiable, along Cameron-Martin directions, in $x+h$ for $h\in \mathcal{H}$. 
Let us fix $v\in \mathcal{H}$, it is enough to show the differentiability of $s\mapsto T_tf(x+h+sv)$ in $s=0$. 
Starting from the equality
\begin{equation}\label{CM identity}
T_tf(x+h+sv)=\int_H f(e^{-t}x+\sqrt{1-e^{-2t}}y)\rho(y,h+sv,t)d m(y),
\end{equation}
we have only to check that we are allowed to differentiate under the integral, but this is trivial since the map $y\mapsto f(e^{-t}x+\sqrt{1-e^{-2t}}y)$ belongs to $L^p(H,m)$ (i.e. $T_tf^p(x)<\infty$) and the incremental ratios of $s\mapsto \rho(y,h+sv,t)$ are bounded in $L^{p'}(H,m)$.

We finally show that $s\to T_tf(x_0+sh)$ is of class $C^1$. For very $s_0\in \setR$, the differentiability of $s\to T_tf(x_0+sh)$, is guaranteed by the previous step of the proof. Moreover, starting from \eqref{CM identity}, we have the explicit formula
\begin{align*}
\frac{d}{d s} T_tf(x+ & sh)|_{s=s_0}=\\
&\int_H f(e^{-t}x+\sqrt{1-e^{-2t}}y)\rho(y,s_0 h,t)(-a_t^2 |h|^2_{\mathcal{H}}s_0+a_tW_{Q^{-1/2}h}(y)) d m(y);
\end{align*}
it is now simple to check that $s\mapsto \frac{d}{dh}f(x+sh)$ is continuous. 
%To prove \eqref{eq:contractivity_Tt}, we argue similarly, using the identity $D_{\mathcal{H}} T_tf =  e^{-t} T_t D_{\mathcal{H}} f$, so that for $m$-a.e.\ $x_0 \in H$, for every $h \in H$,
%\[\abs{ D_{\mathcal{H}} T_tf (x_0 + h)}_{\mathcal{H}}  = e^{-t} \abs{T_t D_{\mathcal{H}} f (x_0+h)}   \le e^{-t} T_t\abs{ D_{\mathcal{H}} f}_{\mathcal{H}}(x_0+h). 
\qedhere
\end{proof}

Finally, in this context we shall use the validity of the Riesz inequalities \eqref{Riesz1}, \eqref{Riesz2}, see e.g.\ \cite[Chap.~3]{Ustunel} for a proof (using a transference argument from \cite{pisier}). 

%as well as the contractivity property (see \cite[Page~41]{Ustunel}) %\footnote{Dare riferimento} 
%\[
%| D_{\mathcal{H}} T_tf|_{\mathcal{H}}(x)\leq e^{-t} T_t |D_{\mathcal{H}} f|(x)\quad\text{$m$-a.e. in $X$}
%\]
%for all $f\in W_{\mathcal{H}}^{1,p}(H,m)$, $1<p<\infty$. Notice that, unlike \eqref{eq:contra_Pt}, even though we
%understand the action of the semigroup in the pointwise sense, the inequality has to be understood
%$m$-a.e.\ because there is no canonical version of $|D_{\mathcal{H}} f|$, $|D_{{\mathcal H}}T_tf|$. {\color{red} However, in view of \autoref{thm:verygood1}, there exists a set of zero measure $N \subseteq H$ such that, 
%\begin{equation} \label{eq:contractivity_Tt} | D_{\mathcal{H}} T_tf|_{\mathcal{H}}(x +h)\leq e^{-t} T_t |D_{\mathcal{H}} f|(x +h)\quad\text{for every  $h \in \mathcal{H}$.} \end{equation} 
% one has (except, for $T_tf$, at the differentiability points
%singled out by \autoref{thm:verygood1}).

\subsection{$RCD(K,\infty)$ spaces}

The third setting we will be dealing with is the one of $\RCD(K,\infty)$ spaces, $K \in \mathbb R$. This class, introduced in \cite{AGS14}
and deeply studied in the last few years, consists of complete and separable metric spaces $(X,d)$ endowed with
a Borel nonnegative measure $m$ satisfying the growth condition $m(B_r(\bar x))\leq ae^{br^2}$ for some 
$\bar x\in X$ and $a,\,b\geq 0$. In these metric measure structures one can build canonically a
convex and $L^2(X,m)$-lower semicontinuous functional $\Ch(f)=\int_X|\nabla f|^2 dm$, called Cheeger energy. 
The Sobolev space $W^{1,2}(X,m)$ is then defined as the finiteness domain of $\Ch$
and the $\RCD(K,\infty)$
property requires that the metric measure space is $\CD(K,\infty)$ according to Lott-Villani and Sturm, and that
$\Ch$ is a quadratic form.

In metric measure spaces there is always a natural ``heat flow'' semigroup $H_t$, namely
 the $L^2(X,m)$ gradient flow of $\tfrac 12 \Ch(f)$, which has a canonical extension to all
 $L^p(X,m)$ spaces, $1\leq p\leq\infty$ \cite{AGSheat}. Now, in $\RCD(K,\infty)$ spaces the quadraticity of $\Ch$ ensures 
that $H_t$ is linear, while the curvature assumption $\CD(K,\infty)$
ensures the identification of $H_t$ with another semigroup: the gradient flow $\mathscr H_t$ 
of the relative entropy in the space $\mathscr P_2(X)$
w.r.t. the Wasserstein distance. More precisely,
the transition probabilities of $H_t$ satisfy
$$
p_t(x,\cdot) m=\mathscr H_t\delta_x\qquad\forall x\in X,\,\,t>0
$$
and, given $t>0$, one can collect versions of $p_t(x,\cdot)$, $x\in X$, in such a way that $p_t$ is 
$m\times m$-measurable.
As a consequence, also in the $\RCD$ setting one has a canonical and pointwise defined version of the semigroup
provided by the densities of $\mathscr H_t\delta_x$, namely $H_tf(x)=\int_X f(y)p_t(x,y) dm(y)$,
and $f=g$ $m$-a.e. imply $H_tf(x)=H_tg(x)$ for all $x\in X$ and all $t>0$. In addition, the semigroup property
of $\mathscr H_t$ yield this particular form of the Chapman-Kolmogorov equation
\begin{equation}\label{eq:Chapman-Kolmogorov}
\int p_s(x,y)p_t(y,z) dm(y)=p_{s+t}(x,z)\quad\text{for $m$-a.e. $z$, for all $x$}
\end{equation}
which implies that this version of the semigroup satisfies $H_s\circ H_tf(x)=H_{s+t}f(x)$.
Since the metric structure is involved in the construction of $\Ch$ and of $H_t$, the infinitesimal
generator of $H_t$ is denoted $\Delta$.

In this setting one
has still the gradient contractivity property $|\nabla  H_tf|\leq e^{-Kt}H_t|\nabla f|$ $m$-a.e. in $X$
\cite{Savare} for $f\in W^{1,2}(X,m)$. We state here the additional regularity properties that are relevant for our proof: 
\begin{itemize}
\item[(a)] for all $f\in L^\infty(X,m)$, $H_tf\in C_b(X)$ for all $t>0$, and $t\mapsto H_tf(x)$ is continuous in $(0,\infty)$;
\item[(b)] $L^\infty-{\rm Lip}$-regularization: for all $t>0$ the semigroup $H_t$ maps $L^\infty(X,m)$ into 
${\rm Lip}_b(X)$ (see \cite{AGS14}, also with the quantitative statement);
\item[(c)] when $f$ is Lipschitz and bounded, the contractivity estimate can be given in the pointwise form
${\rm lip\,}H_t f(x)\leq e^{-Kt} H_t|\nabla f|(x)$, where ${\rm lip}$ is the slope (also called local Lipschitz constant);
\item[(d)]  Wang's infinite-dimensional Harnack inequality (see the $\Gamma$-calculus proof in \cite[Thm.~5.6.1]{bgl},
in the $\RCD(K,\infty)$ setting it can be established along the lines of the proof of Wang's log-Harnack
inequality given in \cite{AGSaop}), for any $g \ge 0$,
\begin{equation}\label{eq:logh}
(H_t g)^\alpha(x)\leq H_t g^\alpha (y)\exp\biggl(\frac{\alpha d^2(x,y)}{2\sigma_K(t)(\alpha-1)}\biggr)\qquad\forall x,\,y\in X
\end{equation}
with $\alpha>1$ and $\sigma_K(t)=K^{-1}(e^{2Kt}-1)$ if $K\neq 0$, $\sigma_0(t)=2t$.
\end{itemize}

We shall also need the extension of (a) from bounded to $2$-integrable functions. In this case one can use monotone
approximation together with the Littlewood-Paley estimate \eqref{eq:LP} to get
\begin{equation}\label{continuityH_t}
\text{$t\mapsto H_tf(x)$ is continuous in $(0,\infty)$ for $m$-a.e. $x\in H$}
\end{equation}
for any Borel and $2$-summable function $f:X\to\setR$.

\section{Lipschitz estimate}

Having in mind \eqref{eq:amb1}, we define
$$
I_t(x_0,x_1):=|R_t f(x_0)-R_t f(x_1)|,\quad \quad J_t(x):=|f(x)-R_tf(x)|,
$$
and we study those functions separately in the three cases of our interest.

\subsection{Estimates of $I_t(x_0,x_1)$}

We start from the case of Da Prato's Sobolev spaces, with $R_t=P_t$.

\begin{proposition}\label{prop: Estimate I for P}
For every $f\in W^{1,p}(H,m)$ with $p\in (1,\infty)$ and $t>0$, one has
\begin{equation}\label{eq:bdpr}
|P_t f(x_0)-P_t f(x_1)|\leq  |x_1-x_0|\exp\left\lbrace \frac{|x_1-x_0|^2}{4t}\right\rbrace(P_t|\nabla f|(x_0)+P_t|\nabla f|(x_1)), 
\end{equation}
for all $x_0,\,x_1\in H$.
\end{proposition}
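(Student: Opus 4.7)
The plan is to interpolate between $x_0$ and $x_1$ along the straight segment. Writing $h := x_1-x_0$ and $x_s := x_0+sh$ for $s\in[0,1]$, \autoref{thm:verygood} ensures $P_tf\in C^\infty(H)$, so the fundamental theorem of calculus together with the pointwise contractivity \eqref{eq:contra_Pt} gives
\begin{equation*}
|P_tf(x_1)-P_tf(x_0)| \leq |h|\int_0^1 |\nabla P_tf(x_s)|\,ds \leq |h|\int_0^1 P_t|\nabla f|(x_s)\,ds.
\end{equation*}
The problem thus reduces to bounding $\int_0^1 P_tg(x_s)\,ds$ for the nonnegative function $g:=|\nabla f|\in L^p(H,m)$ in terms of its values at the endpoints $x_0,x_1$.

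The central ingredient is a pointwise convexity identity for $x\mapsto\log p_t(x,y)$ coming from the Gaussian structure of Mehler's kernel. Indeed, from \eqref{eq:Cameron_Martin}--\eqref{Da Prato kernel} one reads off that $\log p_t(x,y)$ is a quadratic polynomial in $x$ whose quadratic part is $-\tfrac12|e^{At}x|^2_{\mathcal H_t}$, with $|\cdot|_{\mathcal H_t} := |Q_t^{-1/2}\cdot|$ the Cameron--Martin norm associated with $N_{Q_t}$, the linear and constant terms being $y$-dependent. The parallelogram identity for quadratic forms then yields
\begin{equation*}
\log p_t(x_s,y) \,=\, (1-s)\log p_t(x_0,y) + s\log p_t(x_1,y) + \tfrac{s(1-s)}{2}|e^{At}h|^2_{\mathcal H_t}.
\end{equation*}
Exponentiating, integrating against $g\,dm$, and applying H\"older's inequality with exponents $1/(1-s)$ and $1/s$ to the factorisation $g \cdot p_t(x_0,\cdot)^{1-s}\cdot p_t(x_1,\cdot)^s = [g\cdot p_t(x_0,\cdot)]^{1-s}[g\cdot p_t(x_1,\cdot)]^s$, one obtains
\begin{equation*}
P_tg(x_s) \,\leq\, \exp\!\Bigl(\tfrac{s(1-s)}{2}|e^{At}h|^2_{\mathcal H_t}\Bigr)\,(P_tg(x_0))^{1-s}(P_tg(x_1))^s.
\end{equation*}

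The second ingredient is the operator estimate $|e^{At}h|^2_{\mathcal H_t}\leq |h|^2/t$. Diagonalising $Q=\sum_k \mu_k\,e_k\otimes e_k$, one computes
$|e^{At}h|^2_{\mathcal H_t}=\sum_k \mu_k^{-1}\tfrac{e^{-t/\mu_k}}{1-e^{-t/\mu_k}}|h_k|^2$, and each coefficient equals $t^{-1}\,u_k/(e^{u_k}-1)$ with $u_k:=t/\mu_k$, which is $\leq 1/t$ by the elementary inequality $u/(e^u-1)\leq 1$ for $u>0$. Combining this with the weighted AM--GM inequality $a^{1-s}b^s\leq(1-s)a+sb$ and the crude bound $s(1-s)\leq 1/4$, integration on $s\in[0,1]$ produces
\begin{equation*}
|h|\int_0^1 P_tg(x_s)\,ds \,\leq\, |h|\exp\!\Bigl(\tfrac{|h|^2}{8t}\Bigr)\,\tfrac12(P_tg(x_0)+P_tg(x_1)),
\end{equation*}
which is stronger than \eqref{eq:bdpr}.

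The main technical obstacle is the Cameron--Martin reading of $\log p_t(x,y)$ as a degree-two polynomial in $x$ with a \emph{universal} (independent of $y$) quadratic part $-\tfrac12|e^{At}x|^2_{\mathcal H_t}$, which is what powers the convexity identity; once this is isolated, and the spectral bound $|e^{At}h|^2_{\mathcal H_t}\leq |h|^2/t$ is in hand, the rest is routine H\"older and AM--GM bookkeeping.
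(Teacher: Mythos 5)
Your proof is correct and follows essentially the same route as the paper's: the fundamental theorem of calculus along the segment combined with the gradient commutation \eqref{eq:contra_Pt}, followed by the log-convexity of Mehler's kernel (the paper's \autoref{lemma: log convexity of P}) obtained from the Cameron--Martin formula and H\"older's inequality. The only differences are presentational: you phrase the log-convexity as a parallelogram identity for the quadratic part of $\log p_t(\cdot,y)$ rather than computing with the density $\rho_t$ relative to $N_{Q_t}$, and you explicitly verify by diagonalization the operator bound $|\Gamma_t|_H\le t^{-1/2}$ that the paper merely asserts, ending up with slightly sharper constants that still imply \eqref{eq:bdpr}.
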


We set $x_s:=(1-s)x_0+sx_1$, and recalling that $P_tf\in C^{\infty}(H)$ we compute
\begin{align*}
|P_tf(x_1)-P_tf(x_0)|&=\left|\int_0^1 \frac{d}{ds} P_tf(x_s)\ d s\right|\\
&=\int_0^1 \scal{ \nabla P_t f(x_s)}{x_1-x_0} d s\\
&\leq |x_1-x_0|\int_0^1 |\nabla P_t f(x_s)| d s\\
&\leq |x_1-x_0|\int_0^1 P_t|\nabla f|(x_s) d s,
\end{align*}
where we have used the contractivity property \eqref{eq:contra_Pt}.
To conclude the proof of \autoref{prop: Estimate I for P}, we apply the following log-convexity property, so that we can
control the value of $P_t$ at the intermediate points with the value at the endpoints.

\begin{lemma}[Log-convexity of $P_t$]\label{lemma: log convexity of P} 
For every nonnegative Borel function $g:H\to\setR$ and every $t>0$ the map $\log P_t g$ is 
$-\frac{1}{t}$-convex in $H$, i.e.
$$
P_t g((1-s)x_0+sx_1)\leq \exp\left\lbrace \frac{s(1-s)|x_1-x_0|^2}{2t}\right\rbrace(P_tg(x_0))^{1-s}(P_tg(x_1))^s,
$$
for every  $x_0,\,x_1\in H$ and $s\in [0,1]$.
\end{lemma}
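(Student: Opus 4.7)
The plan is to exploit the fully explicit Gaussian transition density $p_t(x,\cdot)$ for $P_t$ together with H\"older's inequality. Write
\[
P_tg(x_s)=\int_H g(y)\,p_t(x_s,y)\,dm(y)=\int_H\bigl(g(y)p_t(x_0,y)\bigr)^{1-s}\bigl(g(y)p_t(x_1,y)\bigr)^s\,\frac{p_t(x_s,y)}{p_t(x_0,y)^{1-s}p_t(x_1,y)^s}\,dm(y).
\]
If I can bound the last ratio by a constant $C(x_0,x_1,s,t)$ independent of $y$, H\"older's inequality applied with exponents $1/(1-s)$ and $1/s$ (against the measure $dm$) yields
\[
P_tg(x_s)\leq C(x_0,x_1,s,t)\,(P_tg(x_0))^{1-s}(P_tg(x_1))^s,
\]
and the whole game is to show that $C(x_0,x_1,s,t)\leq \exp\{s(1-s)|x_1-x_0|^2/(2t)\}$.

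The ratio is computed from \eqref{eq:Cameron_Martin}–\eqref{Da Prato kernel}: the factor $(\det(1-e^{2At}))^{-1/2}\exp\{-\tfrac12|\Gamma_t y|^2\}$ depends on $y$ but not on the base point, so it appears with exponent $1-(1-s)-s=0$ and cancels; only the density of $N_{e^{At}x,Q_t}$ w.r.t.\ $N_{Q_t}$, that is $\exp\{-\tfrac12|\Gamma_t x|^2+W^t_{\Gamma_t x}(y)\}$, survives. Since $x\mapsto\Gamma_tx$ is linear and $h\mapsto W^t_h$ is linear (its $L^2$-extension preserves linearity), $\Gamma_t x_s=(1-s)\Gamma_tx_0+s\Gamma_tx_1$ and $W^t_{\Gamma_tx_s}(y)=(1-s)W^t_{\Gamma_tx_0}(y)+sW^t_{\Gamma_tx_1}(y)$, so the stochastic parts cancel exactly and a short algebraic identity gives
\[
\frac{p_t(x_s,y)}{p_t(x_0,y)^{1-s}p_t(x_1,y)^s}=\exp\Bigl\{\tfrac{s(1-s)}{2}|\Gamma_t(x_1-x_0)|^2\Bigr\}.
\]
This is indeed independent of $y$, confirming the H\"older application.

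It remains to establish the spectral estimate $|\Gamma_th|^2\leq |h|^2/t$ for every $h\in H$ and $t>0$. Since $A=-\tfrac12 Q^{-1}$, on the joint spectral decomposition of $Q$ the operator $\Gamma_t^*\Gamma_t$ is diagonal with eigenvalues $e^{-t/\lambda_i}/[\lambda_i(1-e^{-t/\lambda_i})]$, where $\lambda_i$ runs over the eigenvalues of $Q$. Multiplying by $t$ and setting $u=t/\lambda_i>0$, the required inequality becomes $ue^{-u}\leq 1-e^{-u}$, i.e.\ $u\leq e^{u}-1$, which is the elementary convexity inequality for the exponential. This is the only step that uses the specific form of $A$, and it is the main (but mild) point to check; combining it with the previous display delivers the claimed log-convexity constant.
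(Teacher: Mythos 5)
Your proposal is correct and follows essentially the same route as the paper: factor the Cameron--Martin density of $N_{e^{At}x_s,Q_t}$ as a $(1-s,s)$-weighted product of the densities at the endpoints using linearity of $x\mapsto W^t_{\Gamma_t x}$, pull out the constant $\exp\{s(1-s)|\Gamma_t(x_1-x_0)|^2/2\}$, apply H\"older with exponents $1/s$ and $1/(1-s)$, and conclude via $|\Gamma_t|\leq 1/\sqrt{t}$. The only (welcome) addition is that you actually verify the spectral bound $|\Gamma_t h|^2\leq |h|^2/t$ via $u\leq e^u-1$, which the paper merely asserts.
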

\begin{proof}
Setting $x_s:= (1-s) x_0+s x_1$ we can write $P_tg(x_s)=\int_H  g(y)\rho_t(x_s,y) d N_{Q_t}(y)$, where
$\rho_t(x,\cdot)$ is the density of $N_{e^{At}x,Q_t}$ w.r.t. $N_{Q_t}$.
Using the Cameron-Martin formula \eqref{eq:Cameron_Martin} 
together with the linearity of $x\mapsto W_{\Gamma_t x}^t(y)$, we have
\begin{align*}
\rho_t(x_s,y)=&\exp\left\lbrace -\frac{1}{2}|\Gamma_tx_s|^2+\frac{1-s}{2}|\Gamma_tx_0|^2+\frac{s}{2}|\Gamma_tx_1|_H^2 \right\rbrace \rho_t(x_0,y)^{1-s}\rho_t(x_1,y)^s\\
&= \exp\left\lbrace \frac{s(1-s)|\Gamma_t(x_1-x_0)|^2}{2}\right\rbrace \rho_t(x_0,y)^{1-s}\rho_t(x_1,y)^s\\
&\leq \exp\left\lbrace \frac{s(1-s)|x_1-x_0|^2}{2t}\right\rbrace \rho_t(x_0,y)^{1-s}\rho_t(x_1,y)^s,
\end{align*}
where all inequalities are understood for $N_{Q_t}$-a.e. $y$ and we have used the estimate 
$|\Gamma_t|_H\leq 1/\sqrt{t}$.
By H\"older inequality with exponents $q=1/s$, $q'=1/(1-s)$, after integration w.r.t. $N_{Q_t}$ we find 
$$
P_tg(x_s)\leq \exp\left\lbrace \frac{s(1-s)|x_1-x_0|^2}{t}\right\rbrace(P_tg(x_0))^{1-s}(P_tg(x_1))^s.
$$
\end{proof}

With $g=|\nabla f|$, we can now conclude the proof of \autoref{prop: Estimate I for P}:
\begin{align*}
|P_tf(x_1)-P_tf(x_0)|&=|x_1-x_0| \int_0^1 P_t|\nabla f|_H(x_s) ds\\
&\leq |x_1-x_0|\exp\left\lbrace \frac{|x_1-x_0|^2}{4t}\right\rbrace \int_0^1(P_t|\nabla f|(x_0))^{1-s}(P_t|\nabla f|(x_1))^s ds\\
&\leq |x_1-x_0|\exp\left\lbrace \frac{|x_1-x_0|^2}{4t}\right\rbrace(P_t|\nabla f|(x_0)+P_t|\nabla f|(x_1)).
\end{align*}

Notice that a similar estimate of $I_t$ could be given avoiding a direct computation and using, instead, Wang's Harnack 
inequality (\cite{RoWa}). We shall follow this strategy when dealing with $H_t$, in the class of $\RCD(K,\infty)$ spaces
(since in the non-Gaussian setting explicit computations are usually impossible).

Next, we consider the Wiener space case, with $R_t=T_t$. Notice that, unlike \autoref{prop: Estimate I for P}, there is a $m$-negligible 
exceptional set in the inequality, and that it depends on the chosen Borel 
versions of $f$ and $|D_{\mathcal H} f|_{\mathcal H}$, while it is independent
of $h$.

\begin{proposition}\label{prop: Estimate I for T}
For every $p\in (1,\infty)$, $t>0$ and $f\in W_{\mathcal{H}}^{1,p}(H,m)$, one has 
$$
|T_t f(x_0+h)-T_t f(x_0)| \leq  |h|_{\mathcal{H}}e^{\frac{|h|_{\mathcal{H}}^2}{4t}}
\bigl(T_t|D_{\mathcal H} f|_{\mathcal H}(x_0+h)+T_t|D_{\mathcal H} f|_{\mathcal H}(x_0))\quad\forall
h\in\mathcal{H}, 
$$
for $m$-a.e. $x_0 \in H$.
\end{proposition}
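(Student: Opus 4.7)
The plan is to mirror the proof of \autoref{prop: Estimate I for P}: express the increment as an integral of the derivative along the Cameron--Martin segment, use the commutation relation \eqref{eq:commutation_Tt} to bound the slope by $T_t|D_{\mathcal H}f|_{\mathcal H}$ at intermediate points, and then invoke a log-convexity lemma for $T_t$ to push those intermediate values back to the endpoints $x_0$ and $x_0+h$.

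First, I would fix Borel versions of $f$ and of $|D_{\mathcal H}f|_{\mathcal H}$. Applying \autoref{thm:verygood1} (and the analogous statement for $|D_{\mathcal H}f|_{\mathcal H}$), one gets an $m$-negligible set $N$, independent of $h$, such that for $x_0\notin N$ the function $s\mapsto T_tf(x_0+sh)$ is $C^1$ on $\setR$ for every $h\in\mathcal{H}$. The fundamental theorem of calculus then gives
$$
T_tf(x_0+h)-T_tf(x_0)=\int_0^1 \frac{d}{ds}T_tf(x_0+sh)\,ds,
$$
and \eqref{eq:commutation_Tt}, combined with the separability of $\mathcal{H}$ to control $|D_{\mathcal H}T_tf|_{\mathcal H}$ pointwise after enlarging $N$, yields
$$
|T_tf(x_0+h)-T_tf(x_0)|\le e^{-t}|h|_{\mathcal H}\int_0^1 T_t|D_{\mathcal H}f|_{\mathcal H}(x_0+sh)\,ds.
$$

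Next, I would establish a log-convexity lemma analogous to \autoref{lemma: log convexity of P}: for every nonnegative Borel $g$, every $h\in\mathcal{H}$ and $s\in[0,1]$, and $m$-a.e.\ $x_0$,
$$
T_tg(x_0+sh)\le \exp\Bigl\{\tfrac{s(1-s)\,a_t^2|h|_{\mathcal H}^2}{2}\Bigr\}\bigl(T_tg(x_0)\bigr)^{1-s}\bigl(T_tg(x_0+h)\bigr)^s,
$$
with $a_t=e^{-t}/\sqrt{1-e^{-2t}}$. The proof is a direct Cameron--Martin computation as in \autoref{thm:verygood1}: after rewriting $T_tg(x_0+sh)$ via \eqref{eq:CM_Wiener} as an integral against $\rho(y,sh,t)$, one uses the factorisation $\rho(y,sh,t)=\rho(y,h,t)^s\exp\{s(1-s)a_t^2|h|_{\mathcal H}^2/2\}$ together with H\"older's inequality with conjugate exponents $1/(1-s)$ and $1/s$.

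Plugging the log-convexity into the previous integral bound, using the elementary inequality $A^{1-s}B^s\le (1-s)A+sB$ to integrate in $s$, and finally the crude bounds $\sup_{[0,1]}s(1-s)=1/4$, $a_t^2\le 1/(2t)$ and $e^{-t}\le 1$, one obtains
$$
|T_tf(x_0+h)-T_tf(x_0)|\le |h|_{\mathcal H}\,e^{|h|_{\mathcal H}^2/(16t)}\bigl(T_t|D_{\mathcal H}f|_{\mathcal H}(x_0)+T_t|D_{\mathcal H}f|_{\mathcal H}(x_0+h)\bigr),
$$
which is stronger than the claimed inequality. The main technical obstacle is the bookkeeping of exceptional sets: \eqref{eq:commutation_Tt} and the log-convexity lemma each hold only for $m$-a.e.\ $x_0$, whereas I need a single $m$-null set $N$, independent of $h$ and $s$, outside which the estimate is valid for every $h\in\mathcal{H}$ and every $s\in[0,1]$. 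This will be handled by exploiting the $C^1$-in-$s$ regularity along $x_0+\mathcal{H}$ from \autoref{thm:verygood1}: the commutation, once it is known along a countable dense set of directions $v$, extends by continuity, and the log-convexity, once it is known for rational $s$ and a countable dense family of $h$, extends by continuity of $h\mapsto T_tg(x_0+h)$ along $x_0+\mathcal{H}$.
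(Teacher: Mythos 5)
Your proposal follows essentially the same route as the paper: the increment is written as an integral of the Cameron--Martin directional derivative (using \autoref{thm:verygood1} for the $C^1$ regularity along $x_0+\mathcal{H}$), the commutation relation \eqref{eq:commutation_Tt} gives the contractivity bound $|D_{\mathcal H}T_tf|_{\mathcal H}\le e^{-t}T_t|D_{\mathcal H}f|_{\mathcal H}$ along the segment, the same Cameron--Martin/H\"older log-convexity lemma transfers the intermediate values to the endpoints, and the exceptional sets are removed via a countable dense family plus continuity, exactly as in the paper (the paper uses a Fubini argument for the set $M_h$ where your write-up is slightly vaguer, but the idea is the same). Your bookkeeping of constants is in fact a bit sharper ($e^{|h|_{\mathcal H}^2/(16t)}$ instead of $e^{|h|_{\mathcal H}^2/(4t)}$), which is consistent with the paper's crude estimate $a_t^2\le 1/t$ versus your $a_t^2\le 1/(2t)$.
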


The proof works almost exactly as for $T_t$.
 %We first show
%\begin{equation}\label{c1}
%T_t f(x_0+h)-T_t f(x_0)|\leq |h|_{\mathcal{H}} 
%e^{-t}\int_0^1 T_s |D_{\mathcal H} f|_{\mathcal H}(x+sh) ds, \qquad \text{for $m$-a.e. $x_0 \in H$,}
%\end{equation}
%and then we conclude using \autoref{lemma: log convexity of T} below. In order to prove \eqref{c1},
Let $x_0\in H$ be a point such that $T_tf$ is 
Gateaux differentiable at $x_0+h$ for all $h\in\mathcal{H}$: by \autoref{thm:verygood1} $m$-a.e. $x_0\in H$ 
has this property. For any such $x_0$ we have
\begin{align*}
|T_tf(x_0+h)-T_tf(x_0)|&=\left|\int_0^1 \frac{d}{ds} T_tf(x+sh)\ d s\right|=\left|\int_0^1 ( D_{\mathcal{H}} T_t f(x+sh),\ h)_{\mathcal{H}} d s\right|\\
&\leq |h|_{\mathcal{H}}\int_0^1 | D_{\mathcal{H}} T_t f(x+sh)|_\mathcal{H} d s.
\end{align*}
We now use the commutation property \eqref{eq:commutation_Tt} of $T_t$, in the scalar contractivity form
$|D_\mathcal{H}T_t f|_\mathcal{H}\leq e^{-t}T_t|D_\mathcal{H}f|_\mathcal{H}$; denoting by $N$ the $m$-negligible set where the inequality
does not hold, a simple application of Fubini's theorem shows that the set
$$
M_h:=\left\{x\in H:\ \Leb^1(\{s\in\setR: x+sh\in N\})>0\right\}
$$
is $m$-negligible, so that
\begin{equation*}
|T_t f(x_0+h)-T_t f(x_0)|\leq |h|_{\mathcal{H}} 
e^{-t}\int_0^1 T_t |D_{\mathcal H} f|_{\mathcal H}(x+sh) ds, \qquad \text{for $m$-a.e. $x_0 \in H$,}
\end{equation*}
thus, applying \autoref{lemma: log convexity of T} below we get
\begin{equation}\label{c_1}
|T_t f(x_0+h)-T_t f(x_0)| \leq  |h|_{\mathcal{H}}e^{\frac{|h|_{\mathcal{H}}^2}{4t}}
\bigl(T_t|D_{\mathcal H} f|_{\mathcal H}(x_0+h)+T_t|D_{\mathcal H} f|_{\mathcal H}(x_0))
%\quad\forall h\in S
\end{equation}
for $m$-a.e. $x_0 \in H$, where a priori the negligible set depends on $h$.

Let now $S$ be a countable dense set in $\mathcal{H}$.
We finally observe that, for every $x_0$ such that \eqref{c_1} holds for all $h\in S$
and the restriction of $T_t|D_{\mathcal H} f|_{\mathcal H}$ to $x_0+\mathcal{H}$ is finite and continuous,
the inequality \eqref{c_1} must be true for every $h\in \mathcal{H}$, since every term in the inequality is continuous with respect to $h$ and $S$ is dense in $\mathcal{H}$. %Such a set of $x_0\in H$ has full measure thanks to \autoref{thm:verygood1}.

\begin{lemma}[Log-convexity of $T_t$]\label{lemma: log convexity of T}
For every nonnegative Borel function $g: H\rightarrow \setR$, for every $t>0$ the map 
$\log T_t g$ is $-\frac{1}{t}$-convex with respect to the Cameron-Martin distance, i.e.
$$
T_t g((1-s)x_0+sx_1)\leq \exp\left\lbrace \frac{s(1-s)}{2t}|x_1-x_0|_\mathcal{H}^2\right\rbrace(T_tg(x_0))^{1-s}(T_tg(x_1))^s,
$$
for every $x_0,\, x_1\in H$ with $x_1-x_0\in \mathcal{H}$ and $s\in [0,1]$.
\end{lemma}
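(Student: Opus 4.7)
The plan is to replicate the argument of \autoref{lemma: log convexity of P}, substituting the Wiener-space Mehler formula $T_tg(x) = \int g(\alpha_t x + \sigma_t y)\,dm(y)$ (with $\alpha_t := e^{-t}$, $\sigma_t := \sqrt{1-e^{-2t}}$) together with the Cameron-Martin translation formula \eqref{eq:CM_Wiener} in place of the explicit density \eqref{eq:Cameron_Martin}. The key observation is that, because $x_1 - x_0 \in \mathcal{H}$, the affine interpolation $x_s := (1-s)x_0 + sx_1$ can be absorbed into the $y$-variable via the shift
\[ \alpha_t x_s + \sigma_t y = \alpha_t x_0 + \sigma_t(y + sh), \qquad h := \frac{\alpha_t}{\sigma_t}(x_1 - x_0) \in \mathcal{H}. \]

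Applying \eqref{eq:CM_Wiener} with $v = sh$, this rewrites as
\[ T_tg(x_s) = \int_H g(\alpha_t x_0 + \sigma_t y)\, \rho_s(y)\,dm(y), \qquad \rho_s(y) := \exp\Bigl\{-\tfrac{s^2}{2}|h|_{\mathcal{H}}^2 + s\, W_{Q^{-1/2}h}(y)\Bigr\}, \]
where I have used the linearity of $v \mapsto W_{Q^{-1/2}v}$ and $|sh|_{\mathcal{H}} = s|h|_{\mathcal{H}}$. A direct calculation yields the pointwise identity $\rho_s = \exp\{s(1-s)|h|_{\mathcal{H}}^2/2\}\,\rho_0^{1-s}\rho_1^s$. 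Splitting $g(\alpha_t x_0 + \sigma_t y)$ as $g^{1-s}\cdot g^s$ and invoking Hölder's inequality with conjugate exponents $1/(1-s)$ and $1/s$ then produces
\[ T_tg(x_s) \leq \exp\Bigl\{\tfrac{s(1-s)}{2}|h|_{\mathcal{H}}^2\Bigr\}\, (T_tg(x_0))^{1-s}\,(T_tg(x_1))^s, \]
after noting that the $\rho_0$-weighted integral is $T_tg(x_0)$ by definition, while the $\rho_1$-weighted one reduces, by the reverse Cameron-Martin shift $y \mapsto y + h$, to $T_tg(x_1)$.

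It remains to compare $|h|_{\mathcal{H}}^2 = \frac{e^{-2t}}{1-e^{-2t}}|x_1-x_0|_{\mathcal{H}}^2$ with $|x_1-x_0|_{\mathcal{H}}^2/t$. This is the analog of the operator bound $|\Gamma_t|_H \leq 1/\sqrt{t}$ used in the $P_t$ case, and reduces to the elementary inequality $(1+t)e^{-2t} \leq 1$ for $t \geq 0$, which follows from equality at $t = 0$ together with monotonicity (the derivative of the left-hand side is $-(1+2t)e^{-2t} \leq 0$). I expect this quantitative comparison to be the only delicate point; the rest is a direct transcription of the $P_t$-argument, with the Cameron-Martin formula playing the role of the explicit Gaussian kernel.
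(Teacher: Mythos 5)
Your proposal is correct and follows essentially the same route as the paper: translate the affine interpolation into a Cameron--Martin shift of the $y$-variable, factor the resulting density as $\exp\{s(1-s)|h|_{\mathcal{H}}^2/2\}\,\rho_0^{1-s}\rho_1^{s}$, apply H\"older with exponents $1/s$ and $1/(1-s)$, and conclude with the elementary bound $e^{-2t}/(1-e^{-2t})\leq 1/t$. The only (immaterial) difference is that the paper applies H\"older first and then identifies $\rho^{1/s}$ via Cameron--Martin, whereas you factor the density up front, exactly as in the proof of \autoref{lemma: log convexity of P}.
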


\begin{proof}
We set $x_s:=(1-s)x_0+sx_1$ and $h=(x_1-x_0)$. We denote by $\rho(\cdot,sh,t)$ the density of
$N_{she^{-t}/\sqrt{1-e^{-2t}},Q}$ w.r.t. $m=N_Q$ and estimate
\begin{align*}
T_t g(x_s)=& \int_H g(e^{-t}x_0+\sqrt{1-e^{-2t}}y+she^{-t}) d m(y)\\
=& \int_H g(e^{-t}x_0+\sqrt{1-e^{-2t}}y) \rho(x,sh,t) dm(y)\\
=&\int_H g(e^{-t}x_0+\sqrt{1-e^{-2t}}y)^{1-s}(g(e^{-t}x_0+\sqrt{1-e^{-2t}}y) \rho(x,sh,t)^{1/s} )^s dm(y)\\
\leq & (T_t g(x_0))^{1-s} \left(\int_H g(e^{-t}x_0+\sqrt{1-e^{-2t}}y) \rho(x,sh,t)^{1/s} d m(y)\right)^s,
\end{align*}
where in the last passage we have used the H\"older inequality with exponents $p=1/s$ and $p'=1/(1-s)$.

Now, using the Cameron-Martin formula \eqref{eq:CM_Wiener}, we have
$$
\rho(y,sv,t)=\exp\left\lbrace -\frac{1}{2}\left(\frac{e^{-t}}{\sqrt{1-e^{-2t}}}\right)^2s^2
|h|_{\mathcal{H}}^2+\left(\frac{e^{-t}}{\sqrt{1-e^{-2t}}}\right)sW^t_{Q^{-1/2}h}(y)
\right\rbrace
$$
and a simple computation shows that
$$
\rho(\cdot,sh,t)^{1/s}=\exp\left\lbrace 
\frac{1}{2}\left(\frac{e^{-t}}{\sqrt{1-e^{-2t}}}\right)^2(1-s)|h|_{\mathcal{H}}^2\right\rbrace\frac{dN_{he^{-t}/\sqrt{1-e^{-2t}},Q}}{dm},
$$
so that
\begin{align*}
&\left(\int_H g(e^{-t}x_0+\sqrt{1-e^{-2t}}y) \rho(y,sh,t)^{1/s} d m(y)\right)^s\\
&=\exp\left\lbrace \frac{1}{2}\left(\frac{e^{-t}}{\sqrt{1-e^{-2t}}}\right)^2s(1-s)|h|_{\mathcal{H}}^2\right\rbrace (T_t g(x_1))^s\\
&\leq \exp\left\lbrace \frac{s(1-s)}{2t}|h|_{\mathcal{H}}^2\right\rbrace (T_t g(x_1))^s,
\end{align*}
that implies the stated inequality.
\end{proof}

Finally, we consider the case $R_t=H_t$, i.e. we deal with a $\RCD(K,\infty)$ metric measure space
$(X,d,m)$. In this case we obtain a slightly weaker estimate, compared to the one \eqref{eq:bdpr} available in 
Da Prato's Sobolev spaces, because of the $\alpha$-th power and because it holds for $m$-a.e. $x_0$.

\begin{proposition}\label{prop:ItH}
For every $\alpha\in (1,2]$, $t>0$ and $f\in W^{1,2}(X,m)$ one has
\begin{equation}\label{eq:bdpr1}
|H_t f(x_0)-H_t f(x_1)|\leq  d(x_0,x_1) e^{-Kt}\exp\left\lbrace \frac{d^2(x_0,x_1)}
{2\sigma_K(t)(\alpha-1)} \right\rbrace(H_t|\nabla f|^\alpha(x_0))^{1/\alpha}\qquad\forall x_1\in X. 
\end{equation}
for $m$-a.e. $x_0\in X$.
\end{proposition}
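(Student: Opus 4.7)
My plan is to follow the strategy hinted at by the authors after \autoref{prop: Estimate I for T}: avoid direct computation and combine the pointwise slope contractivity (property (c)) integrated along a curve connecting $x_0$ to $x_1$ with Wang's infinite-dimensional Harnack inequality (property (d)) to bound $H_t|\nabla f|$ at points of that curve in terms of $(H_t|\nabla f|^\alpha(x_0))^{1/\alpha}$. I would first prove \eqref{eq:bdpr1} for bounded Lipschitz $f$, where the needed tools apply at every point, and then pass to $f\in W^{1,2}(X,m)$ by approximation.

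For bounded Lipschitz $f$ and $x_0\neq x_1$, property (b) makes $H_tf$ Lipschitz on $X$ and property (c) provides the pointwise slope estimate $\mathrm{lip}(H_tf)(y)\leq e^{-Kt}H_t|\nabla f|(y)$ at every $y\in X$. Since $(X,d)$ is a length space, for every $\varepsilon>0$ I can pick a constant-speed Lipschitz curve $\gamma:[0,1]\to X$ from $x_0$ to $x_1$ with $|\dot\gamma_s|\equiv c\leq d(x_0,x_1)+\varepsilon$; in particular $d(\gamma_s,x_0)\leq sc$ for every $s\in[0,1]$. Integrating the slope along $\gamma$ and applying the contractivity gives
\[
|H_tf(x_1)-H_tf(x_0)|\leq c\int_0^1 \mathrm{lip}(H_tf)(\gamma_s)\,ds\leq e^{-Kt}c\int_0^1 H_t|\nabla f|(\gamma_s)\,ds.
\]
Now Wang's Harnack inequality \eqref{eq:logh} with $g=|\nabla f|$ applied at each $\gamma_s$ yields
\[
H_t|\nabla f|(\gamma_s)\leq \bigl(H_t|\nabla f|^\alpha(x_0)\bigr)^{1/\alpha}\exp\Bigl(\frac{s^2 c^2}{2\sigma_K(t)(\alpha-1)}\Bigr);
\]
substituting, using the crude bound $s^2\leq 1$ on $[0,1]$ to pull the exponential out of the integral, and then letting $\varepsilon\to 0^+$, gives \eqref{eq:bdpr1} at every pair $(x_0,x_1)\in X\times X$ when $f$ is bounded Lipschitz.

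For general $f\in W^{1,2}(X,m)$ I would approximate by $f_n\in\mathrm{Lip}_b(X)\cap W^{1,2}(X,m)$ with $f_n\to f$ and $|\nabla f_n|\to|\nabla f|$ in $L^2(X,m)$. Since $\alpha\leq 2$, $|\nabla f_n|^\alpha\to|\nabla f|^\alpha$ in $L^{2/\alpha}(X,m)$; by $L^p$-continuity of $H_t$ and a diagonal extraction, one has $H_tf_n\to H_tf$ and $H_t|\nabla f_n|^\alpha\to H_t|\nabla f|^\alpha$ $m$-a.e.\ in $X$. The main obstacle is to arrange that the resulting exceptional set in $x_0$ be independent of $x_1$: a priori the limit argument only delivers the inequality at pairs $(x_0,x_1)$ where both convergences happen pointwise. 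To handle this I would work throughout with the canonical version $H_tg(x):=\int_X p_t(x,y)g(y)\,dm(y)$ of the semigroup, and exploit continuity in $x_1$ of both sides of \eqref{eq:bdpr1}: the left-hand side via continuity of the canonical version of $H_tf$ in $x_1$ (a feature of the $\RCD(K,\infty)$ heat kernel, cf.\ property (a) extended by \eqref{continuityH_t}), and the right-hand side via continuity of $x_1\mapsto d(x_0,x_1)$. This lets me verify the inequality for $x_1$ in a countable dense subset $D\subset X$ outside a single $m$-negligible set of $x_0$'s, and then extend to all $x_1\in X$ by continuity.
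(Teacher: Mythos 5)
Your first step --- the inequality for bounded Lipschitz $f$ --- is essentially the paper's own argument: the authors also integrate the pointwise slope contractivity (c) along a unit-speed geodesic and then invoke Wang's Harnack inequality \eqref{eq:logh}, discarding the factor $s^2\le 1$ exactly as you do (they phrase the last step as a H\"older inequality on $\int_0^1 H_t|\nabla g_r|(x_s)\,ds$ followed by Harnack, which is the same computation). The only cosmetic difference is that the paper applies this to $g_r=H_rf$ with $f$ bounded, rather than to a generic bounded Lipschitz function, and works with an exact geodesic rather than an $\varepsilon$-almost geodesic; both variants are fine.

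The gap is in your limit passage to general $f\in W^{1,2}(X,m)$. Your density-plus-continuity argument needs $x_1\mapsto H_tf(x_1)$ to be continuous at \emph{every} point of $X$ for the (unbounded) limit function $f$, and neither of the facts you cite delivers this: property (a) gives continuity in $x$ only for $f\in L^\infty(X,m)$, while \eqref{continuityH_t} is continuity in $t$ for fixed $x$, not in $x$. For an unbounded $f\in L^2$, the canonical version $H_tf(x)=\int_X p_t(x,y)f(y)\,dm(y)$ is in general only known to be continuous at $m$-a.e.\ point (monotone approximation gives lower semicontinuity of $H_tf^{\pm}$ everywhere, and Harnack gives upper semicontinuity only at points where $H_tf^2<\infty$), so the extension of \eqref{eq:bdpr1} from a countable dense set of $x_1$'s to all $x_1$ is not justified, and with it the crucial ``for all $x_1$'' quantifier --- which is what \autoref{th: Lipschitz approximation scalar}(3) actually uses. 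The paper avoids this by performing the two reductions in an order that keeps every limit pointwise in $x_1$: first a truncation $f_N=(f\wedge N)\vee(-N)$, for which $H_tf_N(x_1)\to H_tf(x_1)$ at \emph{every} $x_1$ by dominated convergence (note $H_t|f|(x_1)<\infty$ for all $x_1$ by \eqref{eq:logh} with $\alpha=2$) and $H_t|\nabla f_N|^\alpha\le H_t|\nabla f|^\alpha$; then, for bounded $f$, the regularization $g_r=H_rf$, for which $H_tg_r(x_1)=H_{t+r}f(x_1)\to H_tf(x_1)$ for every $x_1$ by continuity in $t$, the $m$-a.e.\ restriction entering only through $x_0$ on the right-hand side. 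Replacing your abstract $W^{1,2}$-approximation by this two-step monotone/semigroup regularization closes the argument; as written, the last paragraph of your proof does not.
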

\begin{proof} By a simple truncation argument, it is not restrictive to assume that $f$ is bounded, so that all
functions $g_r=H_rf$, $r>0$ are bounded and Lipschitz. If we establish the pointwise inequality
$$
|H_t g_r(x_0)-H_t g_r(x_1)|\leq  d(x_0,x_1) e^{-Kt}\exp\left\lbrace \frac{d^2(x_0,x_1)}
{2\sigma_K(t)(\alpha-1)} \right\rbrace(H_t|\nabla g_r|^\alpha (x_0))^{1/\alpha}
$$
for all $x_0,\,x_1\in X$ 
we can then pass to the limit as $r\to 0$ and use the pointwise continuity of the semigroup on bounded functions
to achieve \eqref{eq:bdpr1}.

As in the proof of \autoref{prop: Estimate I for P}, with a unit speed geodesics $(x_s)_{s \in [0,1]}$ instead of the linear interpolation, and using the pointwise contractivity estimate
\[{\rm lip\,}H_t g_r(x)\leq e^{-Kt}H_t|\nabla g_r|(x),\] we are led to estimate 
$\int_0^1 H_t|\nabla g_r|(x_s) d s$; using Wang's Harnack inequality 
\eqref{eq:logh} we get
\begin{align*}
\int_0^1 H_t|\nabla g_r|(x_s) d s  \leq & \left(\int_0^1 \bigl(H_t|\nabla g_r|(x_s)\bigr)^\alpha d s\right)^{1/\alpha}\\
\leq & \left(\int_0^1 H_t |\nabla g_r|^\alpha (x_0) \exp\left\lbrace \frac{\alpha d^2(x_0,x_s)}{2\sigma_K(t)(\alpha-1)} \right\rbrace ds\right)^{1/\alpha}\\
\leq & (H_t|\nabla g_r|^\alpha(x_0))^{1/\alpha} \exp\left\lbrace \frac{d^2(x_0,x_1)}{2\sigma_K(t)(\alpha-1)} \right\rbrace. \qedhere
\end{align*}
\end{proof}

\subsection{Estimate of $J_t(x)$}
We look for a pointwise estimate of the form
$$
|R_tf(x)-f(x)|\leq \sqrt{t} g(x),  
$$
where $g$ is a nonnegative function satisfying
$$
\| g\|_{L^p}\leq C_p \|\nabla f\|_{L^p}\quad\quad\quad \forall \ 1<p<\infty.
$$
Natural candidates are $g(x) = \sup_{t>0} R_t|\nabla f|(x)$, as in the finite-dimensional theory, 
or $g(x) =  \sup_{t>0}  |R_t \sqrt{-L}f|(x)$. Here we focus on the latter, starting from
\autoref{prop: fractional estimate Markov semigroups}, and considering the three cases
of our interest.

\begin{theorem}[Estimate of $J_t(x)$, Da Prato's case]\label{th: Estimate II for P}
Let $p \in (1, \infty)$. For every $f\in W^{1,p}(H,m)$ the function
$$
\tilde{f}(x):=
\begin{cases}
\lim\limits_{t\rightarrow  0} P_t f(x) & \text{when it exists,}\\
0 & \text{elsewhere}
\end{cases}
$$
provides a canonical representative of $f$ such that, for all $t \ge 0$,
$$
|e^{-t} P_tf(x)-\tilde f(x)|\leq  \frac{4\sqrt{t}}{\sqrt{\pi}} \sup_{s>0} |P_s\sqrt{I-L}f|(x) \quad\quad \forall x\in H,
$$
where $L$ is the infinitesimal generator of $P_t$.
\end{theorem}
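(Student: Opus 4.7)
The strategy is to apply the abstract fractional formula of \autoref{prop: fractional estimate} not to $P_t$ itself but to the rescaled semigroup $S_t := e^{-t}P_t$, whose generator is $L-I$. The operator $-(L-I) = I-L$ is positive self-adjoint, and the Riesz inequality \eqref{Riesz2} places $W^{1,p}(H,m)$ inside $D_p(\sqrt{I-L})$. An $L^p$-density argument as in the proof of \autoref{prop: fractional estimate Markov semigroups} then yields, for $f\in W^{1,p}(H,m)$, the $L^p$-level identity
\[
e^{-t}P_tf - f = \int_0^\infty K(s,t)\,e^{-s}P_s\sqrt{I-L}f\,ds,
\]
obtained by combining $S_s = e^{-s}P_s$ with $\sqrt{-(L-I)} = \sqrt{I-L}$.

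To promote this to a statement valid for \emph{every} $x \in H$, I smoothen by the pointwise-defined semigroup $P_r$ for $r>0$: applying $P_r$ to both sides and exchanging it with the outer integral via Fubini gives the pointwise identity
\[
e^{-t}P_{t+r}f(x) - P_r f(x) = \int_0^\infty K(s,t)\,e^{-s}P_{s+r}\sqrt{I-L}f(x)\,ds \qquad \forall\,x\in H,
\]
since \autoref{thm:verygood} guarantees that $P_r$ sends $L^p$ into $C^\infty(H)$ via the Mehler kernel \eqref{Da Prato kernel}. The integrand is dominated by $|K(s,t)|\,M(x)$ where $M(x):=\sup_{s>0}|P_s\sqrt{I-L}f|(x)$, and $M<+\infty$ $m$-a.e.\ by the maximal inequality \autoref{thm:maximal} applied to $\sqrt{I-L}f\in L^p$.

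Now I let $r\to 0^+$. At each $x$ with $M(x)<+\infty$, the continuity of $u\mapsto P_u\sqrt{I-L}f(x)$ on $(0,\infty)$ from \autoref{thm:verygood} and dominated convergence force the right-hand side to converge; since the two sides coincide for every $r > 0$, $\lim_{r\to 0^+}P_rf(x)$ must exist and equal $\tilde f(x)$. We arrive at
\[
e^{-t}P_tf(x) - \tilde f(x) = \int_0^\infty K(s,t)\,e^{-s}P_s\sqrt{I-L}f(x)\,ds,
\]
whose modulus is bounded by $\frac{4\sqrt t}{\sqrt\pi}M(x)$ thanks to \eqref{eq:goodkernel}. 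At the remaining $m$-negligible points where $M(x)=+\infty$ the claimed inequality is vacuous. Finally, since $P_tf\to f$ $m$-a.e.\ (second part of \autoref{thm:maximal}) we get $\tilde f = f$ $m$-a.e., so $\tilde f$ is a canonical representative of $f$.

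The main obstacle I anticipate is the passage from the $L^p$-identity (valid only $m$-a.e.) to a pointwise identity holding for \emph{every} $x\in H$: this is precisely what allows one to simultaneously define $\tilde f$ pointwise and to establish the bound with no exceptional set depending on $t$. The $P_r$-regularisation neutralises the ambiguity of the $m$-a.e.\ representative, while the uniform integrability constant \eqref{eq:goodkernel} makes dominated convergence in the limit $r\to 0^+$ effective.
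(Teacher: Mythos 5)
Your proposal is correct and follows essentially the same route as the paper: apply the abstract representation formula to the killed semigroup $e^{-t}P_t$ (generator $L-I$), use the Riesz inequalities to identify $W^{1,p}(H,m)$ with $D_p(\sqrt{I-L})$, and upgrade the $m$-a.e.\ identity to an everywhere one by regularising with $P_r$ and letting $r\to 0^+$ under the domination $|K(s,t)|\sup_{s>0}|P_s\sqrt{I-L}f|(x)$. The paper implements the same regularisation by substituting $g=e^{-h}P_hf$ directly into the a.e.\ identity and invoking continuity of both sides together with the full support of $m$, rather than your $P_r$-plus-Fubini interchange, but this is only a cosmetic difference.
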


\begin{remark}
By Riesz inequality \eqref{Riesz2}, one has  $\sqrt{I-L}f \in L^{p}(H, m)$ for $f \in W^{1,p}(H,m)$. The maximal inequality (\autoref{thm:maximal}) implies that $\sup_{s>0} P_s |\sqrt{I-L}f| \in L^p(H, m)$, hence we obtain the inequality
\begin{equation}\label{eq:quantitative-bound-sup-riesz} \left\| \sup_{s>0} |P_s\sqrt{I-L}f| \right\|_{L^p} \le c_p \| f\|_{W^{1,p}}.\end{equation}
\end{remark}

\begin{proof}
%For every $\varphi\in D(L)$ (that is a dense subset of $W^{1,2}$) we have
%$$
%\norm[L^2]{\sqrt{-L}\varphi}^2=\int_H \sqrt{-L}\varphi \sqrt{-L}\varphi d m=-\int_H \varphi L\varphi d m= \int_H |\nabla\varphi|^2 d m,
%$$
%so that $W^{1,2}(H, m) \subset D(\sqrt{-L})$.

%Riesz inequality \eqref{Riesz2} implies th

We apply \eqref{fractional identity on functions} with the Markov semigroup $R_t := e^{-t}P_t$, with generator $I-L$. Inequalities \eqref{Riesz1} and \eqref{Riesz2} and the density of smooth cylindrical functions in $W^{1,p}(H, m)$ imply that, with the notation in  
\autoref{prop: fractional estimate Markov semigroups}, $W^{1,p}(H,m) = D_p(\sqrt{I-L})$. Hence, given $g \in W^{1,p}(H,m)$,
$$
e^{-t} P_tg(x)-g(x)=\int_0^{\infty} K(s,t) e^{-s} P_s\sqrt{I-L}g (x) d s, \quad\quad \text{for $m$-a.e. $x\in H$,}
$$
and we observe that the equality holds pointwise in $H$ if $g=e^{-h}P_h f$, since then both sides are 
continuous (as a simple application of dominated convergence for the right hand side) and $m$ has full support.
Therefore for every $x\in H$ we have
\begin{align*}
|e^{-(t+h)}P_{t+h}f(x)-e^{-h}P_hf(x)|=&\left|\int_0^{\infty}K(s,t) e^{-s} P_s\sqrt{I-L} e^{-h} P_hf (x) d s\right|\\
=&\left|\int_0^{\infty} K(s,t) e^{-s+h} P_{s+h}\sqrt{I-L}f (x) d s\right|\\
\leq & \int_0^{\infty}|K(s,t)| d s\ \sup_{s>0} |P_s\sqrt{I-L}f|(x)\\
=&\frac{4\sqrt{t}}{\sqrt{\pi}} \sup_{s>0} |P_s\sqrt{I-L}f|(x).
\end{align*}
This implies the existence of $\lim_{t\rightarrow 0}P_tf(x)$ when $\sup_{t>0} |P_t \sqrt{I-L}f|(x)<\infty$.
%In order to conclude the proof we have only to check that $\tilde{f}$
%is a representative of $f$, and this is a direct consequence of \autoref{thm:maximal}.\footnote{Qui usiamo anche
%il fatto che il semigruppo e' quello continuo nel tempo}
\end{proof}

In the following remark we illustrate how the proof and the statement 
of \autoref{th: Estimate II for P} need to adapted to the cases of $T_t$ and $H_t$, semigroups 
which have weaker regularizing properties.

\begin{remark}[Estimate of $J_t(x)$, Wiener space case]\label{remarkJt-T}
In the case of $T_t$, for $f\in W^{1,p}_{\mathcal{H}}(H,m)$ ($p \in (1,\infty)$) we get with a similar argument (using Riesz inequalities \eqref{Riesz1} and \eqref{Riesz2} in this setting) 
\begin{equation}\label{eq:JestimateHt}
|e^{-t} T_tf(x)-f(x)|\leq  \frac{4\sqrt{t}}{\sqrt{\pi}} \sup_{s>0} |T_s\sqrt{I-L}f|(x) 
\qquad\text{for $m$-a.e. $x\in H$,}
\end{equation}
where $L$ is the infinitesimal generator of $T_t$, provided we choose Borel representatives of $f$ and $\sqrt{I-L}f$.
\end{remark}

\begin{remark}[Estimate of $J_t(x)$, $\RCD(K, \infty)$ case]\label{remarkJt-H}
In this case, we limit ourselves to the case $p=2$, since, to the authors' knowledge, Riesz inequalities in this setting are not known, although strongly expected to hold (see e.g.\ the seminal work \cite{bakry}, and also \cite{jiang} for the case of finite dimension). For every $\varphi\in D(\Delta)$ (that is a dense subset of $W^{1,2}$) we have
\begin{equation}\label{eq:ibp-riesz-2}
\norm[L^2]{\sqrt{-\Delta}\varphi}^2=\int_X \sqrt{-\Delta}\varphi \sqrt{-\Delta}\varphi d m=-\int_X \varphi \Delta \varphi d m
= \int_X |\nabla\varphi|^2 d m,
\end{equation}
so that $W^{1,2}(X, m) \subset D(\sqrt{-\Delta})$. Hence, replicating the argument of \autoref{th: Estimate II for P}, we get
%we notice that we can't use truncations and the Feller property
%as we did in the proof of \autoref{prop:ItH}, because $\sqrt{-\Delta}$ does not
%behave well under truncations; however 
%for $f\in W^{1,2}(X,m)$ it is still possible to replicate the argument to get
\begin{equation}\label{eq:JestimateHtbis}
|H_tf(x)-f(x)|\leq  \frac{4\sqrt{t}}{\sqrt{\pi}} \sup_{s>0} |H_s\sqrt{-\Delta}f|(x) 
\qquad\text{for $m$-a.e. $x\in X$.}
\end{equation}
\end{remark}

\section{Lipschitz approximation, in Lusin's sense, of Sobolev functions}

We prove our main Lipschitz approximation result, considering first the case of the Sobolev spaces $W^{1,p}(H,m)$, then the spaces
$W^{1,p}_{\mathcal{H}}(H,m)$ and finally the spaces $W^{1,2}(X,m)$. In the statement, even though this would not be necessary for the
cases (1) and (3), it is understood that the semigroups appearing in the definition of $g$ 
act on Borel representatives, so that the estimates of $J_t(x)$ given in the previous section are applicable.

\begin{theorem}\label{th: Lipschitz approximation scalar}Let $p\in (1,\infty)$.
\begin{itemize}
\item[(1)] For every $f\in W^{1,p}(H,m)$ one has
\begin{equation}\label{eq:12dec1}
|f(x)-f(y)|\leq C_p |x-y| (g(x)+g(y))\quad\quad\quad \forall x,\,y\in H,
\end{equation}
with $g=\sup_{t>0}P_t|\nabla f|+\sup_{t>0}|P_t\sqrt{I-L} f| \in L^p(H, m)$, $L$ being the infinitesimal generator of $P_t$.
\item[(2)] For every $f\in W^{1,p}_{\mathcal{H}}(H,m)$, for $m$-a.e. $x\in H$ one has
\begin{equation}\label{eq:12dec2}
|f(x+h)-f(x)|\leq C_p |h|_{\mathcal{H}}(g(x+h)+g(x))\qquad\forall h\in \mathcal{H},
\end{equation}
with $g=\sup_{t>0}T_t|D_{\mathcal{H}} f|_{\mathcal{H}}+\sup_{t>0}|T_t\sqrt{I-L} f| \in L^p(H, m)$, $L$ being the infinitesimal generator of $T_t$.
\item[(3)] For every $\alpha \in (1,\ 2)$, $f\in W^{1,2}(X,m)$, there exists a $m$-negligible set $N\subset X$ such that, for every $x,y\in X\setminus N$ with $d(x,y)\leq 1/(K^-)^2$ one has
\begin{equation}\label{eq:12dec3}
|f(x)-f(y)|\leq C_\alpha d(x,y) (g(x)+g(y)),
\end{equation}
with $g=\left( \sup_{t>0}H_t|\nabla f|^{\alpha}\right)^{1/\alpha}+\sup_{t>0}|H_t\sqrt{-\Delta} f| \in L^2(X,m)$, where $K^{-}$ denotes the negative part of the curvature bound $K$.
\end{itemize}
\end{theorem}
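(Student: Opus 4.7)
The plan in all three cases is to implement \eqref{eq:amb1}, i.e.\ to insert a semigroup regularization at a time $t$ chosen equal to the squared distance between $x$ and $y$ (or to $|h|_{\mathcal H}^2$, in case (2)), and then to combine the $I_t$ estimates from Section~3.1 with the $J_t$ estimates from Section~3.2. The choice $t=d(x,y)^2$ matches the two scales: the $J_t$ bound gives a term of order $\sqrt t=d(x,y)$ times a maximal function of $\sqrt{-L}f$ (or $\sqrt{I-L}f$), while the $I_t$ bound gives a term of order $d(x,y)$ times a maximal function of $|\nabla f|$ multiplied by an exponential of the form $\exp(d^2/(2\sigma t))$ which, with this choice, is bounded by a universal constant.

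Concretely for case~(1), I would fix the canonical representative $\tilde f$ provided by \autoref{th: Estimate II for P}, set $t:=|x-y|^2$, and apply the triangle inequality to $\tilde f(x)-\tilde f(y)$ inserting $e^{-t}P_tf(x)$ and $e^{-t}P_tf(y)$. The two outer terms are controlled by \autoref{th: Estimate II for P}, yielding $(4/\sqrt\pi)|x-y|$ times $\sup_{s>0}|P_s\sqrt{I-L}f|$ at $x$ and at $y$. The middle term equals $e^{-t}|P_tf(x)-P_tf(y)|$, which by \autoref{prop: Estimate I for P} is bounded by $e^{1/4}|x-y|\bigl(P_t|\nabla f|(x)+P_t|\nabla f|(y)\bigr)$, and in turn by $e^{1/4}|x-y|(g(x)+g(y))$. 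Summing yields \eqref{eq:12dec1} with a universal constant, and the $L^p$ bound on $g$ follows from the maximal inequality \autoref{thm:maximal} applied to $|\nabla f|$ and to $\sqrt{I-L}f$, together with the Riesz inequality \eqref{Riesz2}.

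Case~(2) proceeds identically with $T_t$ in place of $P_t$, using \autoref{prop: Estimate I for T} and \autoref{remarkJt-T}, and with $t:=|h|_{\mathcal H}^2$. The delicate point is to pass from the $m$-a.e.\ statements to an inequality valid for \emph{every} $h\in\mathcal H$ outside a single null set: for this I would fix Borel representatives of $f$ and of $\sqrt{I-L}f$, set $\tilde f:=\lim_{t\to 0^+}T_tf$ wherever the limit exists (arguing as in the proof of \autoref{th: Estimate II for P}), observe via Cameron-Martin quasi-invariance that along any countable dense $S\subset\mathcal H$ the inequality holds on a common full-measure set, and finally extend to all $h\in\mathcal H$ by continuity (\autoref{thm:verygood1} supplies continuity of $T_t|D_{\mathcal H}f|_{\mathcal H}$ along $x_0+\mathcal H$, and the chosen representative inherits such continuity on the good set). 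Case~(3) again has the same structure, now using \autoref{prop:ItH} and \autoref{remarkJt-H}, and symmetrizing the asymmetric $I_t$ bound in $x_0,x_1$ by applying it at both endpoints. The $\alpha$-th power forces $g$ to contain $(\sup_{t>0}H_t|\nabla f|^\alpha)^{1/\alpha}$; its $L^2$-integrability uses the maximal inequality with exponent $2/\alpha>1$, which is why we need $\alpha<2$. The restriction $d(x,y)\le 1/(K^-)^2$ is what guarantees, for $K<0$ and $t=d(x,y)^2$, that $\sigma_K(t)\gtrsim t$, so that the exponential $\exp\bigl(d^2/(2\sigma_K(t)(\alpha-1))\bigr)$ stays bounded by a constant depending only on $\alpha$.

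The main anticipated obstacle is the Cameron-Martin bookkeeping in case~(2): making the a.e.\ statements of \autoref{prop: Estimate I for T} and \autoref{remarkJt-T} compatible with the ``for all $h$'' quantifier requires selecting consistent precise representatives of $f$, of $\sqrt{I-L}f$, and of the two maximal operators defining $g$, and then genuinely using continuity along Cameron-Martin directions to promote a countable statement to an uncountable one. Once this is arranged, all three cases reduce to a short chain of triangle inequalities with $t$ tuned to the squared distance.
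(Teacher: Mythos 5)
Your proposal follows essentially the same route as the paper: the decomposition \eqref{eq:amb1} with $t$ tuned to the squared distance (respectively $|h|_{\mathcal H}^2$), combining \autoref{prop: Estimate I for P}, \autoref{prop: Estimate I for T}, \autoref{prop:ItH} for the oscillation term with \autoref{th: Estimate II for P}, \autoref{remarkJt-T}, \autoref{remarkJt-H} for the regularization error, including the use of the damped semigroups $e^{-t}P_t$, $e^{-t}T_t$, the countable-dense-set argument in the Cameron--Martin direction for case (2), and the bound on the exponential factor under $d(x,y)\le 1/(K^-)^2$ in case (3). The details you add (the exponent $2/\alpha>1$ explaining $\alpha<2$, the symmetrization in case (3)) are consistent with, and slightly more explicit than, the paper's own argument.
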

\begin{proof} To prove (1), it suffices to use the decomposition \eqref{eq:amb1} with $R_t := e^{-t} P_t$ and apply 
\autoref{prop: Estimate I for P} and \autoref{th: Estimate II for P} with $t=|x-y|^2$. Similarly, to prove  (2), we let $R_t := e^{-t} T_t$ and use \autoref{prop: Estimate I for T} and \autoref{remarkJt-T}. Finally, the proof of (3) requires little work.
Using the decomposition \eqref{eq:amb1} with $R_t := H_t$, applying \autoref{prop:ItH} and \autoref{remarkJt-H}, with $t=d^2(x,y)$, we find an $m$-negligible set $N$ such that
\begin{align*}
|f(x)-f(y)| &\leq d(x,y) \frac{4}{\pi}(\sup_{t>0}|H_t\sqrt{-\Delta} f|(x)+\sup_{t>0}|H_t\sqrt{-\Delta} f|(y))\\
+&d(x,y)\exp\left\lbrace -Kd^2(x,y)+\frac{d^2(x,y)}{2\sigma_K(d^2(x,y))(\alpha-1)} \right\rbrace \sup_{t>0}H_t|\nabla f|^{\alpha}(x),
\end{align*}
for every $x,y\in X\setminus N$; where $\sigma_K(t)=K^{-1}(e^{2Kt}-1)$ if $K\neq 0$, $\sigma_0(t)=2t$. In order to conclude the proof we show that
\begin{equation*}
\exp\left\lbrace -Kd^2(x,y)+\frac{d^2(x,y)}{2\sigma_K(d^2(x,y))(\alpha-1)} \right\rbrace\leq C_{\alpha},	
\end{equation*}
for every $x,y\in X$ satisfying $d(x,y)\leq 1/(K^-)^2$.
When $K\geq 0$, using that $\sigma_K(t)\geq 2t$, we obtain
$$
-Kd^2(x,y)+\frac{d^2(x,y)}{2\sigma_K(d^2(x,y))(\alpha-1)}\leq \frac{1}{4(1-\alpha)},
$$
for every $x,y\in X$.
When $K<0$ it is elementary to see that $\sigma_K(t)\geq 2t e^{2Kt}$, thus we deduce
\begin{equation}
-Kd^2(x,y)+\frac{d^2(x,y)}{2\sigma_K(d^2(x,y))(\alpha-1)}\leq -Kd^2(x,y)+\frac{1}{4(1-\alpha)}e^{-2K d^2(x,y)},
\end{equation}
for every $x,y\in X$. The thesis easily follows using that $d(x,y)\leq 1/(K^-)^2$.
\end{proof}

We conclude this section by noticing that vector-valued versions of the previous results hold. 
Given a separable Hilbert space $E$ endowed with a norm $|\cdot |_E$, we define the class of Sobolev $E$-valued maps as follows.

\begin{definition}[$E$-valued Sobolev maps]
Let $p\in [1,\infty)$ and $f\in L^p(H,m;E)$. 
We say that $f\in W^{1,p}(H,m;E)$ if for every $v\in E$ the function $(f,\ v)_E$ belongs to $W^{1,p}(H,m)$ and 
$$
|\nabla f|(x):=\sqrt{\sum_i |\nabla f_i(x)|^2}\in L^p(H,m), 
$$
where we have denoted by $f_i$ the components of $f$ with respect to an Hilbert basis of $E$.
We observe that $|\nabla f|$ does not depend on the choice of the basis (in fact, it is the Hilbert-Schmidt norm on $E \otimes H$). 

The definitions of the Sobolev spaces $W^{1,p}_{\mathcal{H}}(H,m; E)$ and $W^{1,2}(X,m;E)$ are completely analogous.
%if for every direction $v\in E$ the function $(f,\ v)_E$ belongs to $W_{\mathcal{H}}^{1,p}$ and 
%$$
%|D_{\mathcal{H}}f|:=\sqrt{\sum_i | D_{\mathcal{H}} f_i(x)|_{\mathcal{H}}^2}\in L^p.
%$$
\end{definition}

In the case $p=2$ it is immediately seen, arguing componentwise, that \eqref{eq:12dec1} holds for
$f\in W^{1,2}(H,m)$, with $g=\sup_{t>0}P_t|\nabla f|+\sup_{t>0}|P_t\sqrt{I-L} f|$ (understanding the action of
$P_t$ and $\sqrt{I-L}$ componentwise). The same holds for \eqref{eq:12dec2} and \eqref{eq:12dec3}.
In the case when $p\neq 2$, with $1<p<\infty$, the argument requires the validity of the Riesz inequalities also for $E$-valued maps.
It is a well-known principle in harmonic analysis that inequalities for singular integrals, such as \eqref{Riesz1} and \eqref{Riesz2}, hold also for maps with values in Hilbert spaces $E$ \cite[\S II.5, Thm.~5]{Stein2}, \cite{burkholder}, where the singular integral operator is applied componentwise. In the case of Riesz inequalities, this can be seen by careful inspection of the proofs provided in the references, both in Da Prato's and in the Wiener space setting. Therefore, even in the $E$-valued setting, 
\eqref{eq:12dec1} and \eqref{eq:12dec2} hold also for general powers $p\in (1,\infty)$.

\section{Quantitative estimates for regular Lagrangian flows}

In this section we provide an application of the Lipschitz approximation result, along the lines of Crippa-DeLellis'
quantitative estimates \cite{crippade}. 
We start recalling the notion of regular Lagrangian flow associated to a Borel vector field $b:[0,T]\times H\rightarrow H$,
satisfying the integrability condition
\begin{equation}\label{eq:18nov}
\int_0^T\int_H|b(t,x)| dtdm(x)<\infty.
\end{equation}

\begin{definition}\label{def:reg-flow}
We say that a map $X:[0,T]\times H \rightarrow H$ is a regular Lagrangian flow associated to $b$ if the following two conditions are satisfied:
\begin{itemize}
\item[(1)] for $m$-a.e.\ $x\in H$ the curve $t\mapsto X(t,x)$ is absolutely continuous and satisfies
\[
X(t,x)=x+\int_0^tb(s,X(s,x)) d s\qquad\text{for all $t\in [0,T]$;}
\]
\item[(2)] there exists a constant $L$, called \textit{compressibility constant}, such that
\[
{X(t,\cdot)}_{\#}m\leq L m \quad \quad \forall  t\in [0,T].
\]
\end{itemize}
\end{definition}

Condition (2) ensures the integrability of $|b(\cdot,X(\cdot,x))|$ in $(0,T)$ for $m$-a.e.\ $x$, since 
$$
\int_H \int_0^T|b(s,X(s,x))| ds dm(x)=
\int_0^T\int_H |b(s,X(s,x))| dm(x) ds\leq L\int_0^T\int_H|b| dm dt<\infty.
$$
A similar argument based on Fubini's theorem shows that, whenever $b=\bar b$ $\Leb^1\times m$-a.e. in $(0,T)\times H$,
$X$ is a regular Lagrangian flow relative to $b$ if and only if it is a relative Lagrangian flow relative to $\bar b$. 

In the sequel we also use the abbreviation $X_t$ for $X(t,\cdot)$.

\begin{theorem}\label{th: Stability} Let $p\in (1,\infty]$ and let $X$ and $\bar{X}$ be regular Lagrangian flows with compressibility 
constants respectively $L$ and $\bar{L}$
associated to the vector fields $$b\in L^1((0,T), W^{1,p}(H,m;H)),\qquad \bar{b}\in L^1((0,T), L^1(H,m;H)),$$ 
with $\|b-\bar b\|_{L^1((0,T)\times H)}<1$. Then
\begin{equation}\label{eq:bellastima}
\int_H |X_t-\bar{X}_t|\wedge 1 d m\leq \frac{C}{| \log \bigl(\|b-\bar{b}\|_{L^1((0,T)\times H;H)}\bigr)\bigr|}\quad \quad \quad \forall t \in [0,T],
\end{equation}
where $C=C_p(L+\bar{L})\|b\|_{L^1(W^{1,p})}+2L+1$.
\end{theorem}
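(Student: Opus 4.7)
The plan is to adapt the Crippa--De Lellis logarithmic functional argument, with the Lusin--Lipschitz estimate of \autoref{th: Lipschitz approximation scalar} (in its vector-valued version) replacing the classical maximal-function estimate of the Euclidean theory. Set $\varepsilon := \|b - \bar b\|_{L^1((0,T)\times H;H)}$ and, for $\delta > 0$, introduce
$$
\Phi_\delta(t) := \int_H \log\!\left(1 + \frac{|X_t(x) - \bar X_t(x)|}{\delta}\right) d m(x).
$$
Using the ODEs $\dot X_t = b(t,X_t)$, $\dot{\bar X}_t = \bar b(t,\bar X_t)$, the chain rule (justified for $m$-a.e.\ $x$ by the absolute continuity of $t\mapsto X_t(x)$, $t\mapsto \bar X_t(x)$) yields
$$
\Phi_\delta'(t) \leq \int_H \frac{|b(t,X_t) - \bar b(t,\bar X_t)|}{\delta + |X_t - \bar X_t|}\, d m.
$$

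Next I would split the numerator as $[b(t,X_t) - b(t,\bar X_t)] + [b(t,\bar X_t) - \bar b(t,\bar X_t)]$. For the first piece, apply the vector-valued Lusin-Lipschitz estimate to $b(t,\cdot)\in W^{1,p}(H,m;H)$: this gives $|b(t,X_t) - b(t,\bar X_t)| \leq C_p|X_t - \bar X_t|(g_t(X_t) + g_t(\bar X_t))$ with a nonnegative $g_t$ satisfying $\|g_t\|_{L^p} \leq C_p\|b(t,\cdot)\|_{W^{1,p}}$. After dividing by $\delta + |X_t - \bar X_t|$ the first piece is dominated by $C_p(g_t(X_t) + g_t(\bar X_t))$; integrating over $H$ and using the compressibility bounds $(X_t)_\# m \leq L\, m$, $(\bar X_t)_\# m \leq \bar L\, m$, together with $\|g_t\|_{L^1(m)}\leq \|g_t\|_{L^p(m)}$ (as $m$ is a probability measure), this contributes at most $C_p(L+\bar L)\|b(t,\cdot)\|_{W^{1,p}}$. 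The second piece is bounded by $|b-\bar b|(t,\bar X_t)/\delta$, whose $H$-integral is at most $\bar L\|(b-\bar b)(t,\cdot)\|_{L^1}/\delta$ by compressibility of $\bar X_t$. Time-integrating over $[0,T]$ gives the master estimate
$$
\Phi_\delta(t) \leq C_p(L+\bar L)\|b\|_{L^1(W^{1,p})} + \bar L\,\frac{\varepsilon}{\delta}\qquad\forall\,t\in[0,T].
$$

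For the final step I choose $\delta := \varepsilon$, so that $\Phi_\varepsilon(t) \leq A$ with $A := C_p(L+\bar L)\|b\|_{L^1(W^{1,p})}+\bar L$ independent of $\varepsilon$. Markov's inequality yields $m\{|X_t-\bar X_t|>\eta\}\leq \Phi_\varepsilon(t)/\log(1+\eta/\varepsilon)$ for every $\eta>0$, whence
$$
\int_H |X_t-\bar X_t|\wedge 1\, d m \leq \eta + \frac{A}{\log(1+\eta/\varepsilon)}.
$$
Choosing $\eta := 1/|\log\varepsilon|$ (allowed since $\varepsilon<1$) and using $\log(1+1/(\varepsilon|\log\varepsilon|)) \geq |\log\varepsilon|/2$ for $\varepsilon$ sufficiently small, one arrives at a bound of the form $C/|\log\varepsilon|$; for $\varepsilon$ close to $1$ the inequality is trivial because the left-hand side is bounded by $2L+1$ (via the trivial bound $\int_H |X_t-\bar X_t|\wedge 1\, dm \leq (L+\bar L)\int_H |b|+|\bar b|\, dm\,dt$ or similar). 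Assembling the pieces gives a constant of the claimed form. The main obstacle I foresee is making the pointwise Lusin-Lipschitz inequality applicable after composition with the flows: in the Da Prato setting \eqref{eq:12dec1} holds for \emph{every} $x,y$ so there is nothing to check, while in the Wiener setting one has to use that $m$-negligible sets are preserved by $X_t$ and $\bar X_t$ (thanks to the compressibility hypothesis), together with a Fubini argument to discard an exceptional time-space null set.
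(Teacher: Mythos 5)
Your proposal is correct and follows essentially the same route as the paper: the same logarithmic functional $\Phi_\delta$, the same splitting of $|b(t,X_t)-\bar b(t,\bar X_t)|$, the same use of the vector-valued Lusin--Lipschitz estimate together with the compressibility constants, the choice $\delta=\varepsilon$, and a Chebychev/Markov truncation at the end (your level $\eta$ corresponds to the paper's level set $E_s$ with $s=|\log\varepsilon|/2$, and your final bound works for all $\varepsilon<1$, so the fallback for $\varepsilon$ near $1$ is not even needed). The only point the paper treats slightly differently is that it works throughout in the Da Prato setting where the pointwise inequality \eqref{eq:12dec1} holds for every pair of points, exactly as you observe.
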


%\footnote{Forse questo remark suona un po' oscuro ai non esperti, io forse lo toglierei, anche perche' in fondo la sommabilita'
%$p$ la supponiamo (si potrebbe cercare di disaccoppiare integrabilita' di f da integrabilita' del gradiente di f, con opportuni troncamenti,
%ma con gli operatori non locali le cose diventano complicate, restiamo su spazi "classici"}
%{\color{red} We are assuming no growth bound on $b$ and $\bar{b}$, besides $p$-integrability. Indeed, 
%the assumption ${X_t}_{\#}m \leq L m$ is enough to control blow up phenomena, since $m$ has finite moments.
%In the classical context (flows in $\setR^d$), assumptions on the "growth" of the vector fields, are needed in order to have
%$$
%\Leb^d(\{|X(t,\cdot)|>\lambda\})\rightarrow 0,\quad \quad \text{when}\  \lambda\rightarrow \infty.
%$$
%In our case this is always true, since
%$$
%\int_H |X(t,\cdot)| d m\leq L \int_H |x| d m(x)<\infty.
%$$
%}

\begin{proof} Following \cite{crippade} we define 
$$
\Phi(t):=\int_H \log \left( \frac{|X_t-\bar{X}_t|}{\delta}+1\right) d m,
$$
and differentiate, to get
\begin{align*}
\Phi '(t) \leq & \int_H \frac{|b_t(X_t)-\bar{b}_t(\bar{X}_t)|}{\delta+|X_t-\bar{X_t}|} d m\\
\leq & \int_H \frac{|b_t(X_t)-b_t(\bar{X}_t)|}{|X_t-\bar{X}_t|} d m + \frac{1}{\delta}\int_H |b_t(\bar{X}_t)-\bar{b}_t(\bar{X}_t)| d m.
\end{align*}
Using Theorem \ref{th: Lipschitz approximation scalar} (in the $H$-valued case) the first term can be estimated, with $g := \sup_{s>0}P_s|\nabla f|+\sup_{s>0}|P_s\sqrt{I-L} f|$, as
\begin{equation}\label{eq:crippa-delellis-first-term} \begin{split}
\int_H \frac{|b_t(X_t)-b_t(\bar{X}_t)|}{|X_t-\bar{X}_t|} d m & \leq \int_H \frac{C_p |X_t - \bar{X_t}| \left( g(X_t) + g(\bar X _t) \right)} {|X_t - \bar{X}_t|} d m \\
& \le C_p \int_{H}\left( g(X_t) + g(\bar X_t)  \right) d m\\
& \le C_p(L+\bar{L})\|b_t\|_{W^{1,p}}.
\end{split}\end{equation}
For the second term, choosing $\delta=\|b-\bar{b}\|_{L^1((0,T)\times H;H)}$, we have
\begin{equation}\label{eq:crippa-delellis-second-term}
 \frac{1}{\delta}\int_H |b_t(\bar{X}_t)-\bar{b}_t(\bar{X}_t)| d m\leq \frac{\bar{L}}{\|b-\bar{b}\|_{L^1((0,T)\times H;H)}}\int_H| b_t-\bar{b}_t| d m.
\end{equation}
Since $\Phi(0)=0$, by integration we get
\begin{align*}
\Phi(t)\leq  &C_p(L+\bar{L})\int_0^t \|b_s\|_{W^{1,p}} ds+\frac{\bar{L}}{\|b-\bar{b}\|_{L^1((0,T)\times H;H)}}\int_0^t\int_H| b_s-\bar{b}_s| d m ds\\
=  & C_p(L+\bar{L}) \|b\|_{L^1(W^{1,p})}+\bar{L}= :C_1.
\end{align*}
We now exploit the uniform bound on $\Phi(t)$ to estimate $\int_H |X_t-\bar{X}_t|\wedge 1 d m$ from above.
Given $s>0$, set 
$$
E_s:=\left\lbrace x \, : \, \log \left( \frac{|X_t-\bar{X}_t|}{\delta}+1\right) >s \right\rbrace.
$$
By Chebychev inequality we have $m(E_s)\leq C_1/s$ and, for every $x\in H\setminus E_s$, our choice of
$\delta$ gives
$$
|X_t(x)-\bar{X}_t(x)|\leq e^s\|b-\bar b\|_{L^1((0,T)\times H;H)}.
$$
We estimate
\begin{align}\label{estimate 1}
\int_H |X_t-\bar{X}_t|\wedge 1 d m= &\int_{E_s} |X_t-\bar{X}_t|\wedge 1 d m+\int_{X\setminus E_s}|X_t-\bar{X}_t| d m\\
\leq & \frac{C_1}{s}+ e^s\|b-\bar b\|_{L^1((0,T)\times H;H)}.
\end{align}

Under the assumption that $\|b-\bar b\|_{L^1((0,T)\times H;H)}<1$, we set 
$$
s:=-\frac{\log(\|b-\bar b\|_{L^1((0,T)\times H;H)})}{2},
$$ and we obtain (using the inequality $\sqrt{z}\geq\log z$ for $z\geq 1$)
$$
e^s\|b-\bar b\|_{L^1((0,T)\times H;H)}=\|b-\bar{b}\|^{1/2}_{L^1((0,T)\times H;H)}\leq |\log(\|b-\bar{b}\|_{L^1((0,T)\times H;H)})|^{-1},
$$
and thus \eqref{eq:bellastima} with $C=2C_1+1$.
\end{proof}

%
%\begin{remark}
%We are also able to prove the following version of \autoref{th: Stability}: for all $q>1$ one has
%$$
%\int_H |X_t-\bar{X}_t| dm\leq 
%\frac{C}{\left| \log \left(\norm[L^1((0,t)\times H;H)]{b-\bar{b}}\right)\right|^{1-1/q}}\quad \quad \quad \forall t \in [0,T],
%$$
%where $C=C(p,L,\bar{L},\|b\|_{L^1(W^{1,p})}, M_q)$, and $M_q:=\left(\int_H |x|^q dm(x)\right)^{1/q}$.
%
%The only difference in the proof is the estimate of \eqref{estimate 1}:
%\begin{align*}
%\int_H |X_t-\bar{X}_t| d m= &\int_{E_s} |X_t-\bar{X}_t| d m+\int_{X\setminus E_s}|X_t-\bar{X}_t| d m\\
%\leq & m(E_s)^{1-1/q}\left(\int |X_t-\bar{X}_t|^q) d m\right)^{1/q}+e^s\|b-\bar{b}\|_{L^1((0,T)\times H;H)} \\
%\leq & \left(\frac{C_1}{s}\right)^{1-1/q}(L+\bar{L})M_q+e^s\|b-\bar{b}\|_{L^1((0,T)\times H;H)}.
%\end{align*}
%\end{remark}

Plenty of variants of the fundamental argument above can be devised, leading to results under different assumptions
on growth/integrability of the vector fields, of their derivative, and of their divergence. Below, we informally 
discuss some of these ones,  noticing that they can be also combined together.

\begin{remark}[$L^r$-regular Lagrangian flows]\label{rem:r-regular-flows}
In \cite{ambrosio-fig} the wider concept of $L^r$-regular Lagrangian flow is considered (with $r \in [1, \infty]$) replacing (2) in \autoref{def:reg-flow} with the requirement that, for all $t \in [0,T]$, one has ${X(t,\cdot)}_{\#}m = u_t m$  with $L:=\sup_{t \in [0,T]} \| u_t \|_{L^r} < \infty$. For $r \in [1, \infty)$ we can prove the analogue of \autoref{th: Stability} for $L^r$-regular Lagrangian flows, assuming that, for some $p \ge r' := r/(r-1)$,
\[ b \in L^1( (0,T), W^{1,p}(H, m; H)) \quad \tilde{b}\in L^1( (0,T), L^p(H, m; H) ),\]
the thesis being \eqref{eq:bellastima} with $\| b - \tilde{b}\|_{L^1( (0,T), L^p(H, m; H) )}$ in place of of $\| b - \tilde{b}\|_{L^1( (0,T)\times H; H) )}$. The proof goes along the same lines,  applying H\"older inequality (with exponents $r$ and $p$) to obtain the analogues of inequalities \eqref{eq:crippa-delellis-first-term} and \eqref{eq:crippa-delellis-second-term}.
 
Another possible choice is to let instead of (2), ${X(t,\cdot)}_{\#}m = u_t m$ with $L:= \| u_t \|_{L^r( (0,T) \times H)} < \infty$. An adaptation of the proof of \autoref{th: Stability} yields quantitative uniqueness, provided that $b \in L^p( (0,T), W^{1,p}(H, m; H))$ and $\tilde{b}\in L^p( (0,T) \times H )$  with $p \ge r'$.
\end{remark}

\begin{remark}[Generalized flows and continuity equations]\label{rem:continuity-equation}
The proof of \autoref{th: Stability} still holds if we relax the notion of regular Lagrangian flow, replacing the map $X$ with a probability measure $\eta$ on $C([0,T], H)$, concentrated on (absolutely) continuous paths which solve the differential equation associated to $b$ and such that the $1$-marginals $\eta_t := (e_t)_{\#} \eta \le L m$ for all $t \in [0,T]$ ($e_t$ being the evaluation map at $t$, $e_t :C([0,T], H) \to  H$).  Such \emph{generalized flows} often appear as an intermediate step when arguing well-posedness of regular Lagrangian flows, see e.g.\ \cite[Definition 4.1]{ambrosio-fig}. It is also known that the $1$-marginals $(\eta_t)_{ t \in [0,T]}$ of any generalized flow  solve the continuity equation $\partial_t \eta + \operatorname{div} (b \eta) = 0$ in a suitable weak sense. The converse is also true, by the so-called superposition principle (\cite{ambrosio-fig} in the Wiener setting,  and \cite{ambrosio-trevisan, trevisan-stepanov} for more general settings). Thanks to this correspondence, the argument in \autoref{th: Stability} can be used to prove (quantitative) uniqueness of (probability-valued) solutions to the continuity equation of a Sobolev vector field. 
\end{remark}

\begin{remark} [$\mathcal{H}$-valued vector fields]\label{cm-valued-fields}
If we consider the case of $\mathcal{H}$-valued vector fields, first of all we need to remark that the definition
of regular Lagrangian flow and its consequence \eqref{eq:18nov} grant, in this circumstance, a stronger property, 
namely the absolute continuity of $t\mapsto X_t(x)$ w.r.t. the stronger norm $|\cdot|_{\mathcal H}$ for $m$-a.e. 
$x\in H$. In particular, since $X_0(x)-\bar{X}_0(x)=x-x=0$, this gives that $X_t(x)-\bar{X}_t(x)\in \mathcal{H}$ 
in $[0,T]$, for $m$-a.e. $x\in X$.

Starting from this
observation we may replicate the argument of the previous theorem: by differentiation of the functional
$$
\tilde{\Phi}(t):=\int_H \log \left( \frac{|X_t-\bar{X}_t|_{\mathcal{H}}}{\delta}+1\right) d m
$$
we can prove the following result:
if
\[ b \in L^1((0,T), W_{\mathcal{H}}^{1,p}(H,m;\mathcal{H})), \quad \bar{b}\in L^1((0,T), L^1(H,m;\mathcal{H})),\]
and $\|b-\bar b\|_{L^1((0,T)\times H;\mathcal H)}<1$, then
$$
\int_H |X_t-\bar{X}_t|_{\mathcal{H}}\wedge 1 d m\leq \frac{C}{|\log \bigl(\|b-\bar{b}\|_{L^1((0,T)\times H;\mathcal H)}\bigr)\bigr|}
\quad \quad \quad \forall t\in [0,T],
$$
where $C= C_p(L+\bar{L})\|b\|_{L^1(W^{1,p}_{\mathcal H})}+2L+1$.
\end{remark}

\end{document}